\newtheorem{pro}{Proposition}[section]
\newtheorem{thm}[pro]{Theorem}
\newtheorem{lem}[pro]{Lemma}
\newtheorem{clm}[pro]{Claim}
\newtheorem{prob}[pro]{Problem}
\newtheorem{cor}[pro]{Corollary}
\newtheorem{quest}[pro]{Question}
\theoremstyle{definition}
\newtheorem{dfn}[pro]{Definition}
\newtheorem{ex}[pro]{Example}
\newtheorem{rmk}[pro]{Remark}
\theoremstyle{remark}
\title{Computing Heegaard Genus is NP-Hard}
\date{\today}
\address{Pitzer College}
\email{bachman@pitzer.edu}
\author{David Bachman}
\thanks{The authors would like to thank Martin Tancer, whose guidance was instrumental at the genesis of this project, and Iyad Kanj and Marcus Schaefer for helpful conversations along the way.}
\begin{document}

\address{Quest University}
\email{rdt@questu.ca}
\author{Ryan Derby-Talbot}

\address{DePaul University}
\email{esedgwick@cdm.depaul.edu}
\author{Eric Sedgwick}

\dedicatory{Dedicated to the memory of Ji\v{r}\'{\i} Matou\v{s}ek }

\newcommand{\NP}{{\sc NP}}
\newcommand{\band}{{\sc and}}
\newcommand{\bvar}{{\sc var}}
\newcommand{\bor}{{\sc or}}
\newcommand{\brep}{{\sc rep}}
\newcommand{\bnot}{{\sc not}}
\newcommand{\bcap}{{\sc cap}}
\newcommand{\bend}{{\sc end}}

\begin{abstract}
We show that {\sc Heegaard Genus $\leq g$}, the problem of deciding whether a triangulated 3-manifold admits a Heegaard splitting of genus less than or equal to $g$, is \NP-hard.  The result follows from a quadratic time reduction of the \NP-complete problem {\sc CNF-SAT} to {\sc Heegaard Genus $\leq g$}.


\end{abstract}

\maketitle

\newcommand{\LQ}{\ensuremath{|Q|}}

\section{Introduction}

While there is a tradition of studying decision problems in 3-manifold topology, the historical focus has been showing that problems are decidable \cite{haken-unknot, rubinstein, thompson, johannson1,johannson2,li-heegaard-genus, jaco-sedgwick, matousek-sedgwick-tancer-wagner}.
More recently, the computational complexity of these and related problems has gained attention \cite{hass-lagarias-pippenger,agol-hass-thurston, schleimer, burton-spreer-taut,  burton-verdiere-demesmay, burton-demesmay-wagner, lackenby-sublink}.
Here we show that one of the most basic decision problems for 3-manifolds, the problem of determining Heegaard genus, is \NP-hard.

Every closed, orientable 3-manifold $M$ has a {\em Heegaard surface}: a closed surface that splits the manifold into a pair of handlebodies (i.e., thickened graphs). The Heegaard genus, $g(M)$, is the minimal genus of a Heegaard surface for $M$, and is one of the most basic 3-manifold invariants.  Because Heegaard surfaces are generic, they have been studied extensively and have been effectively classified for large classes of manifolds \cite{moriah-schultens, kobayashi-2-bridge}.  It is thus natural to ask (phrased as a decision problem):

\begin{prob}{\sc Heegaard Genus $\leq g$:}
Given a triangulated 3-manifold $M$ and a natural number $g$, does $M$ have a Heegaard surface of genus $\leq g$?
\end{prob}

{\sc Heegaard Genus $\leq g$} was shown to be decidable (computable) by Johannson \cite{johannson1,johannson2} in the Haken case and by Li in the non-Haken case \cite{li-heegaard-genus}.  Our main result is the following:

\begin{thm}
\label{t-genus-hard}
{\sc Heegaard Genus $\leq g$} is \NP-hard.
\end{thm}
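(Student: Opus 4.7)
The plan is to give a polynomial-time reduction from \textsc{CNF-SAT} to \textsc{Heegaard Genus} $\leq g$. Given a CNF formula $\varphi$ on $n$ variables with $m$ clauses, I would produce a triangulated closed orientable $3$-manifold $M_\varphi$ and an integer $g_\varphi$, of total description size $O(|\varphi|^2)$, so that $\varphi$ is satisfiable if and only if $g(M_\varphi)\le g_\varphi$.

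The construction is modular. I would assemble $M_\varphi$ by gluing together a library of $3$-manifolds with boundary: a \bvar{} gadget for each variable, an \bor{} gadget for each clause, and auxiliary \brep{} (to duplicate a variable into each of its occurrences), \bnot{} (to negate a literal), \bcap{}, and \bend{} pieces for routing literals and for capping off free boundary components. Each gadget $G$ carries a distinguished partition of $\bdy G$ into input/output subsurfaces, and I would engineer $\bdy G$ to be incompressible in $G$; after gluing, the collection of these subsurfaces forms a disjoint family of essential surfaces in $M_\varphi$. Inside each \bvar{} gadget I build in exactly two relative Heegaard splittings of minimum genus, to be interpreted as TRUE and FALSE. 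Inside each \bor{} gadget I arrange the geometry so that its minimum relative genus is realized only when at least one incoming literal is TRUE, and is strictly larger otherwise.

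Soundness is then direct: a satisfying assignment $\sigma$ selects, in every gadget, a minimum-genus relative splitting, and these assemble across the incompressible gluing surfaces into a global Heegaard surface $\Sigma\subset M_\varphi$ of genus exactly $g_\varphi$. Completeness is where the substance lies. Given any Heegaard surface $\Sigma$ with $g(\Sigma)\le g_\varphi$, I would apply Haken's lemma together with standard thin-position and untelescoping techniques to isotope $\Sigma$ into a normal form with respect to the incompressible skeleton of gadget boundary surfaces. Once $\Sigma$ respects that decomposition, it restricts to a relative splitting on each gadget; a global Euler-characteristic count then forces every gadget to realize exactly its minimum relative genus. By design this pins every \bvar{} gadget to a TRUE/FALSE value and forces every \bor{} gadget to receive a TRUE input, yielding a satisfying assignment.

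The main obstacle is the completeness half, and specifically the gadget design. Each gadget must be constructed so that \emph{every} relative splitting of minimum genus falls into the intended finite list of options, and so that any deviation strictly increases the genus rather than merely failing to lower it — a single unit of slack anywhere in $M_\varphi$ could be spent to evade the intended semantics, so the lower bounds must be tight at every gadget simultaneously. This is where Haken's lemma, combined with a careful case analysis of how a Heegaard surface can meet an incompressible boundary, must rule out exotic configurations. The remaining ingredients — writing down explicit triangulations of the gadgets, verifying the $O(|\varphi|^2)$ size bound, and checking that $g_\varphi$ is computable in polynomial time — are routine.
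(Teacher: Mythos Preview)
Your outline follows the same high-level reduction as the paper, but the completeness argument has a genuine gap at exactly the point you flag as ``the main obstacle'' --- and the fix is not what you propose.

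You write that given a Heegaard surface $\Sigma$ of genus at most $g_\varphi$, you would ``apply Haken's lemma together with standard thin-position and untelescoping techniques to isotope $\Sigma$ into a normal form with respect to the incompressible skeleton of gadget boundary surfaces.'' This does not work in general. Untelescoping (Scharlemann--Thompson) produces a generalized Heegaard splitting whose thin levels are \emph{some} incompressible surfaces, but there is no reason these are the gadget boundaries you built in. The paper makes this explicit: there are manifolds glued along incompressible tori whose minimal-genus Heegaard surface is \emph{not} an amalgamation along those tori (a tunnel-number $n{-}1$, $n$-bridge knot exterior glued to a Seifert fibered piece gives genus $n$ globally while any amalgamation has genus $2n$). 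So incompressibility of the skeleton alone does not force $\Sigma$ to respect it, and Haken's lemma says nothing here --- it concerns reducing spheres, not arbitrary essential surfaces.

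The paper's remedy is to glue the blocks by maps of \emph{high distance} in the curve complex of each torus, with the distance growing linearly in $|Q|$. A theorem of the first author (building on Li) then guarantees that any Heegaard surface of bounded genus can be isotoped off each gluing torus, hence is an amalgamation of splittings of the blocks. This is also why the triangulation is quadratic rather than linear in $|Q|$: each gluing requires a layered triangulation whose length is proportional to the distance, which is itself proportional to $|Q|$. Your proposal treats the quadratic bound as routine bookkeeping, but in the paper it is a direct consequence of this mechanism. Separately, your gadget semantics (two distinguished splittings of a \bvar{} block encoding \textsc{true}/\textsc{false}; an \bor{} block whose minimal genus jumps when all inputs are \textsc{false}) differ from the paper's: there the blocks all have fixed minimal genus two, and truth values are read off from the \emph{bipartition} of torus boundary components induced by a genus-two splitting, together with a global bicoloring. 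You would need to say concretely how a truth value propagates across a gluing surface in your scheme; the paper's bipartition framework handles this automatically.
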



One way of obtaining a Heegaard surface in certain 3-manifolds is to {\it amalgamate} Heegaard surfaces in submanifolds. This approach allows us to relate Heegaard genus to satisfiability of Boolean formulas in {\em conjunctive normal form}, that is Boolean formulas stated as a conjunction of disjunctions, for example:
\[Q=(a \lor c) \land (\lnot a \lor b) \land (b \lor c)\]

We will let \LQ~ denote the length of $Q$ without counting parentheses, e.g.~\LQ = 12 for the above example.

\begin{prob}{\sc CNF-SAT:}
Given $Q$, a Boolean formula in conjunctive normal form, is there a satisfying assignment (i.e., an assignment of truth values to the variables) that makes the formula true?
\end{prob}

{\sc CNF-SAT} is well known to be \NP-complete.  We prove Theorem \ref{t-genus-hard} by giving a polynomial (quadratic) time reduction of {\sc CNF-SAT} to {\sc Heegaard Genus $\leq g$}.  Our reduction will proceed in two steps, first proving that there are manifolds $M_Q$ that encode a formula $Q$:

\theoremstyle{theorem}
\newtheorem*{proA}{Proposition \ref{p-q-mq}}

\begin{proA}
\label{prop1}
Let $Q$ be an instance of {\sc CNF-SAT}.  Then there is a manifold $M_Q$ with Heegaard genus $g(M_Q) \geq \LQ + 2$, with equality holding if and only if $Q$ has a satisfying assignment. \end{proA}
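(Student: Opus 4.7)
I would build $M_Q$ as a graph manifold obtained by gluing together a small library of ``gadget'' 3-manifolds along incompressible tori, with a gluing pattern that mirrors the syntactic structure of $Q$. Each literal, each $\land$, each $\lor$, and each $\lnot$ in $Q$ would contribute exactly one gadget---accounting for $\LQ$ symbols---while each distinct variable contributes a \bvar\ gadget and auxiliary \brep, \bcap, and \bend\ gadgets distribute variable values to their multiple occurrences and cap off the whole assembly; the constant $+2$ should come from two global closure contributions. Because the gluing surfaces are tori, Schultens-style amalgamation gives the bookkeeping inequality $g(M_Q) \le \sum_i g(X_i) - \#(\text{gluing tori})$, and this is what drives the upper-bound direction.

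\textbf{The gadgets.} Each gadget $X$ should be a compact 3-manifold with toroidal boundary whose minimum relative Heegaard genus is one more than the number of boundary tori, and whose minimum-genus splittings come in a small, combinatorially meaningful set of ``states.'' For \bvar, \bor, and \bnot\ gadgets there should be exactly two such state surfaces, labeled \emph{true} and \emph{false}; the \band\ gadget realizes its minimum only when all input states are \emph{true}, the \bor\ gadget only when at least one input is \emph{true}, the \bnot\ gadget swaps states, and the \brep\ gadget forces all of its boundary tori to share one state. The design criterion is that any Heegaard splitting of $X$ that is \emph{not} one of the prescribed state surfaces costs at least one extra genus.

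\textbf{The two inequalities.} Given these gadgets, the upper bound $g(M_Q) \le \LQ + 2$ when $Q$ is satisfiable is explicit: pick a satisfying assignment, choose the corresponding state in each \bvar, propagate through the \brep\ and \bnot\ gadgets, select a state of each \bor\ that is justified by some true literal, and amalgamate. Summing the relative genera and subtracting one per gluing torus gives exactly $\LQ + 2$. For the lower bound, I would show that the gluing tori are essential in $M_Q$ and then, via a normal-surface or untelescoping argument, isotope any Heegaard surface $\Sigma$ to meet them so that it induces a relative splitting of each gadget. Adding contributions gadget-by-gadget yields $g(M_Q) \ge \LQ + 2$ in all cases, and equality forces every gadget to sit in one of its prescribed states; consistency of the state choices then reads off a satisfying assignment.

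\textbf{Main obstacle.} The substantive difficulty is the rigidity step: a minimum-genus Heegaard surface must be shown to respect the gadget decomposition up to isotopy, ruling out ``sneaky'' splittings that could save genus by threading across several gadgets at once or by compressing over the gluing tori. This requires not merely that the gluing tori be incompressible but that they be genuinely essential in each neighbor, together with an invocation of the Schultens--Weidmann-type analysis of how minimal Heegaard splittings interact with essential tori in graph manifolds. Once that rigidity is in hand, the equivalence between minimum-genus splittings of $M_Q$ and satisfying assignments of $Q$ reduces to a routine gadget-by-gadget audit.
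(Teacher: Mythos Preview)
Your overall architecture---gadgets glued along tori, amalgamation for the upper bound, a rigidity step for the lower bound---is the same as the paper's. But the rigidity step, which you correctly flag as the main obstacle, is where your proposal has a genuine gap.

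You propose to get rigidity from incompressibility of the gluing tori together with a ``Schultens--Weidmann-type analysis.'' This is not enough, and in fact the paper cites Schultens--Weidmann precisely as a source of \emph{counterexamples}: there exist manifolds glued along essential tori whose minimal-genus Heegaard surfaces are \emph{not} amalgamations along those tori, and indeed have genus strictly smaller than any amalgamation (the paper's example has an amalgamation of genus $2n$ but a Heegaard surface of genus $n$). So ``the tori are essential'' plus untelescoping does not, by itself, force a minimal splitting to respect the block decomposition. The paper's fix is to glue the blocks via maps that are \emph{sufficiently complicated}---high distance in the Farey graph of the torus---and then invoke a theorem (proved in the appendix, building on work of Li and Bachman) that for such gluings every Heegaard surface of bounded genus must be an amalgamation of splittings of the pieces. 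Without that ingredient, the lower bound and the ``equality forces a satisfying assignment'' direction both collapse.

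Two smaller points. First, the paper's blocks are all genus~$2$ regardless of the number of boundary tori (one, two, or three), and the encoding of truth values is not via distinguished ``state surfaces'' but via the \emph{bipartition of boundary components} that a genus-$2$ splitting induces; the \textsc{or} block is the one with flexibility (all nontrivial bipartitions occur), while \textsc{and}/\textsc{rep} have a unique bipartition and \textsc{not} has only the trivial one. Second, amalgamating a collection of splittings requires more than a bicoloring: the induced directed dual graph must be acyclic (the paper's Definition~2.7), and verifying this for the splittings coming from a satisfying assignment is a nontrivial step in the ``satisfiable $\Rightarrow$ genus $=\LQ+2$'' direction that your sketch omits.
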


\begin{figure}
\psfrag{a}{$a$}
\psfrag{b}{$b$}
\psfrag{c}{$c$}
\psfrag{A}{$\lnot a$}
\psfrag{B}{$a \lor c$}
\psfrag{C}{$\lnot a \lor b$}
\psfrag{D}{$b \lor c$}
\psfrag{E}{$(a \lor c) \land (\lnot a \lor b)$}
\psfrag{F}{$((a \lor c) \land (\lnot a \lor b)) \land (b \lor c)$}
\includegraphics[width=4in]{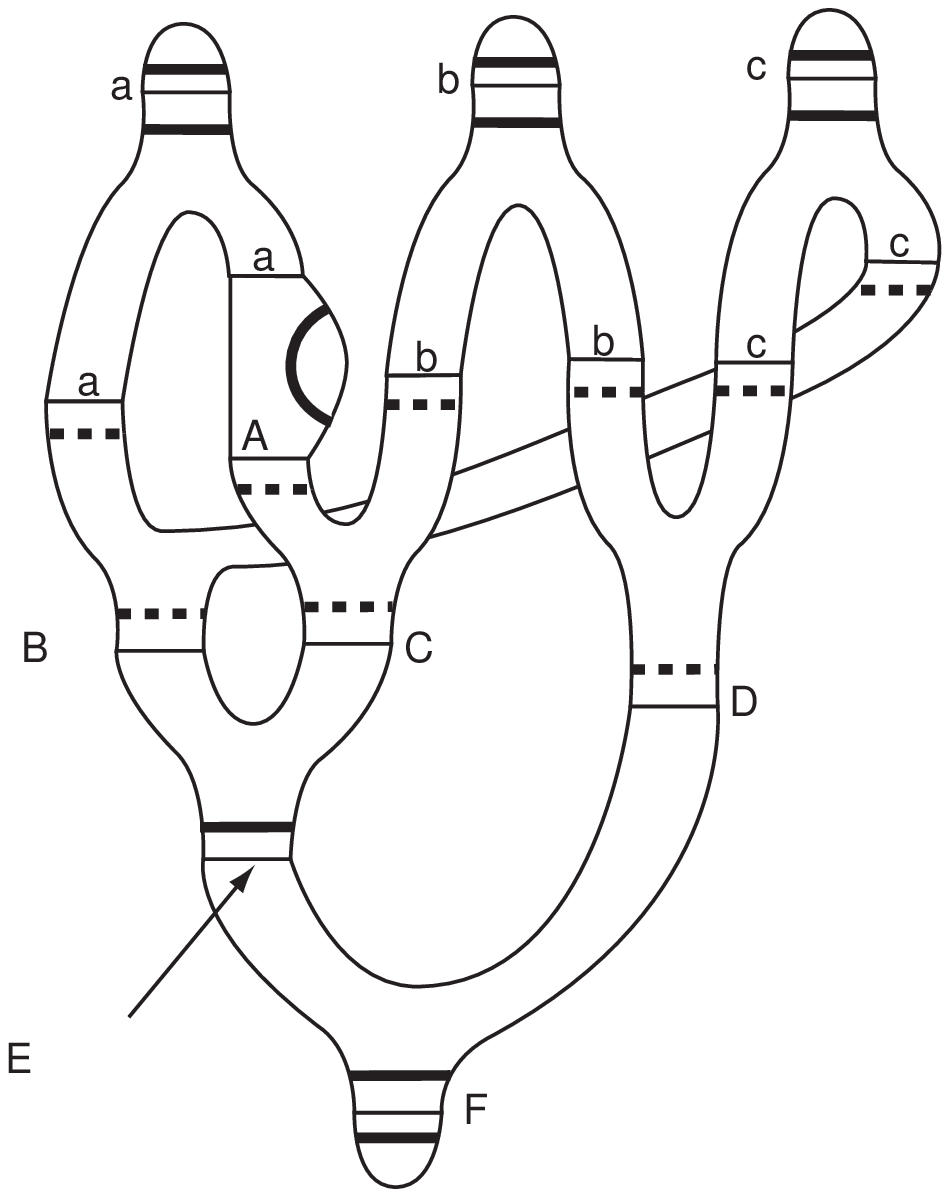}
\caption{The construction of $M_Q$, where $Q=((a \lor c) \land (\lnot a \lor b)) \land (b \lor c)$.}
\label{f:MQexample}
\end{figure}

The proof of Proposition~\ref{p-q-mq} is based on constructing $M_Q$ as a direct translation of the formula $Q$ (a schematic of $M_Q$ for the aforementioned $Q$ is shown in Figure \ref{f:MQexample}), formed by taking a collection of Heegaard genus two ``block'' manifolds, one block for each term (\bvar(iable), \brep(licate), \bnot, \band, \bor) in $Q$, and gluing them together along torus boundary components via high distance maps.  Each gluing surface then represents a sub-statement of $Q$.  The high-distance gluings guarantee that any minimal genus Heegaard surface for $M_Q$ is an amalgamation of Heegaard surfaces of the blocks (we provide a proof of this fact in the appendix of this paper), and this allows us to compute the Heegaard genus of $M_Q$.

Every Heegaard surface induces a {\it bipartition}, a partition into two sets, of its manifold's boundary components.  The blocks are constructed so that each block emulates its logical operator via the way its minimal genus Heegaard surfaces bipartition its boundary components.  The {\sc or} block is flexible, in that every non-trivial bipartition is possible, whereas all other block types have a fixed bipartition of boundary components determined by the minimal genus Heegaard surfaces.  When $Q$ is satisfiable, there is a minimal genus Heegaard surface for each block so that the complementary pieces can be bicolored in a particular way (see Definition~\ref{dfn:GHS}) so that the Heegaard surfaces for the blocks can be amalgamated to a genus $\LQ+2$ Heegaard surface for $M_Q$.  The converse uses the same setup.  We show that the genus of $M_Q$ is at least $\LQ+2$, and that when equality is achieved it is possible to read off a satisfying assignment for $Q$ from a bicoloring induced by Heegaard surfaces for the block manifolds.

There are many manifolds that fit the above description of $M_Q$.  The second step, from which Theorem \ref{t-genus-hard} follows, is that we can construct a triangulation for one efficiently.

\theoremstyle{theorem}
\newtheorem*{proB}{Proposition \ref{t-triangulation-quadratic}}

\begin{proB}
A triangulated $M_Q$ can be produced in quadratic time (and tetrahedra) in \LQ.
\end{proB}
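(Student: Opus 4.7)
The plan is to build $M_Q$ modularly, producing one triangulated block per term of $Q$ and then gluing blocks together along torus boundaries via explicit layered triangulations. First, parse $Q$ in linear time into a rooted tree whose internal nodes carry the operator labels \band, \bor, \bnot, \brep~and whose leaves are \bvar~(with \bcap~and \bend~caps where appropriate). The tree has $O(\LQ)$ nodes and specifies both the $O(\LQ)$ block manifolds to be used and the pairings of their torus boundary components.

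For each of the finitely many block types, fix once and for all a concrete triangulation of constant size, together with a prescribed (and fixed) triangulation on each boundary torus. Stamping out one copy of the appropriate block at every node of the parse tree then contributes $O(\LQ)$ tetrahedra and takes $O(\LQ)$ time.

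The bulk of the work lies in the $O(\LQ)$ torus gluings. Each gluing must realize a mapping class element of sufficiently high distance (in the Farey graph of the torus) for the amalgamation argument underlying Proposition \ref{p-q-mq} to apply. Combinatorially, such a gluing can be implemented by inserting a \emph{layered} triangulation of $T^2 \times I$ between the two blocks: each layer performs one elementary flip on the boundary triangulation (equivalently, one step in the Farey graph) and adds a bounded number of tetrahedra. Because the distance threshold demanded by the amalgamation theorem grows linearly with the target genus $\LQ+2$, some $O(\LQ)$ layers per gluing suffice, contributing $O(\LQ)$ tetrahedra per gluing and $O(\LQ^2)$ tetrahedra in total. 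The resulting triangulation of $M_Q$ therefore uses $O(\LQ^2)$ tetrahedra and can be written down in $O(\LQ^2)$ time.

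The principal obstacle is the gluing step: one must exhibit an explicit sequence of elementary flips whose composition, interpreted as an element of the torus mapping class group, achieves distance at least the threshold forced by the amalgamation hypothesis, and one must verify that this layered cylinder actually implements the intended identification of the two boundary triangulations. Once the per-layer distance increment (a standard fact for the Farey graph of $T^2$) is in hand, the remaining verifications---that boundary triangulations match up across each gluing, that the blocks assemble to a coherent triangulation of $M_Q$, and that the running time is dominated by the output size---are routine bookkeeping.
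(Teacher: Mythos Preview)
Your proposal is correct and follows essentially the same approach as the paper: constant-size triangulated blocks for each of the $O(\LQ)$ terms, joined by layered torus gluings whose length scales linearly with the target genus $\LQ+2$, yielding the quadratic bound. The paper fills in exactly the obstacle you flag, exhibiting an explicit Fibonacci-slope layering sequence (Lemmas~\ref{l-layer-fib} and~\ref{l-distance-fib}) and invoking the amalgamation constant $K$ from Proposition~\ref{t:amalgamation}, but the architecture is the same.
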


   The essential ingredient for our main result is our ability to choose block manifolds whose minimal genus Heegaard surfaces bipartition their boundary components in a way that emulates the required logical operators. It is then worth asking: given a set of bipartitions, is there a 3-manifold whose minimal genus Heegaard surfaces induce precisely that set? In fact, this is an easy corollary of the techniques we use here.

\newtheorem*{qcor}{Corollary \ref{cor-partitions}}

\begin{qcor}
Let $\mathcal P$ be a non-empty set of bipartitions of  $1,2,...,n$.  Then there is a 3-manifold $X$ and a numbering of its boundary components, $1,2,...,n$,  so that the set of bipartitions of $\partial X$ induced by minimal genus Heegaard splittings of $X$ is precisely $\mathcal P$.
\end{qcor}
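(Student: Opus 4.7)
The plan is to reduce the problem to the \textsc{CNF-SAT} reduction already developed: encode the desired set $\mathcal P$ of bipartitions as the set of satisfying assignments of a Boolean formula, and then apply the block-amalgamation construction from Proposition~\ref{p-q-mq} in a modified form that exposes the variable boundary components of $X$.

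First I would identify each bipartition $\{S,\bar S\}$ of $\{1,2,\dots,n\}$ with the pair of truth assignments $\{\chi_S,\chi_{\bar S}\}$ on variables $x_1,\dots,x_n$ determined by $\chi_S(x_i)=1$ iff $i\in S$. Under this identification, $\mathcal P$ corresponds to a set $\mathcal A$ of truth assignments closed under global complementation. Since every Boolean function over $x_1,\dots,x_n$ admits a CNF representation, one may fix a CNF formula $Q_{\mathcal P}$ whose satisfying assignments are precisely the elements of $\mathcal A$.

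Next I would build the manifold $M_{Q_{\mathcal P}}$ as in Proposition~\ref{p-q-mq}, but instead of closing off the $n$ \bvar{} blocks, I would leave unglued the single distinguished torus boundary component of each \bvar{} block corresponding to the variable itself. Labelling these $n$ exposed torus boundary components by $1,2,\dots,n$ produces the manifold $X$ required by the statement.

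The remaining task is to identify the bipartitions induced on $\partial X$ by the minimal-genus Heegaard splittings of $X$. By the amalgamation machinery (proved in the appendix and used throughout the proof of Proposition~\ref{p-q-mq}), every minimal-genus Heegaard splitting of $X$ is an amalgamation of minimal-genus splittings of the blocks, and the associated bicoloring (as in Definition~\ref{dfn:GHS}) yields a satisfying assignment of $Q_{\mathcal P}$. The side of each exposed \bvar{} boundary component is then dictated by the truth value assigned to the corresponding variable, so the induced bipartition of $\partial X$ is precisely the element of $\mathcal P$ corresponding to this assignment; conversely each assignment in $\mathcal A$ gives rise to such a splitting, so the set of induced bipartitions equals $\mathcal P$ exactly.

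The main obstacle is verifying that the amalgamation argument of Proposition~\ref{p-q-mq} carries over to the setting where the \bvar{} blocks retain an exposed torus boundary component. In particular one must check that the high-distance gluings are still numerous enough to force any minimal-genus splitting of $X$ to be an amalgamation, so that the exposed boundary components do not permit additional low-genus Heegaard splittings that bypass the amalgamation structure. One must also confirm that the only source of ambiguity in translating assignments to bipartitions is the global flip of the splitting, which simultaneously swaps both handlebodies and hence complements every truth value; this is exactly what guarantees that the quotient by complementation gives $\mathcal P$ rather than $\mathcal A$.
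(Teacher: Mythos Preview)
Your approach matches the paper's: build $M_Q$ for a CNF formula $Q$ whose satisfying assignments correspond to $\mathcal P$, then omit the \bvar{} blocks to expose $n$ torus boundary components (the paper gives an explicit such $Q$ by writing a DNF for $\mathcal P^C$ and applying De Morgan, while you simply invoke the existence of a CNF representation). One clarification: a \bvar{} block is a trefoil exterior with a \emph{single} torus boundary, so ``leaving it unglued'' amounts to omitting the block entirely---this is precisely what the paper does, and since the removed blocks are leaves of the dual graph $\mathcal G$, the paper observes that Lemmas~\ref{l-min-sat} and~\ref{l-sat-min} carry over verbatim to the resulting $M_Q'$, which disposes of the obstacle you flag.
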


This paper is organized as follows:  Section \ref{s-hs-amalg} contains the required background on Heegaard splittings, surfaces, and amalgamation. Section \ref{s:constructing} gives a recipe for producing $M_Q$ and proves Proposition~\ref{p-q-mq} and Corollary \ref{cor-partitions}. Section \ref{s:triangulating} shows how to triangulate $M_Q$ and proves Proposition \ref{t-triangulation-quadratic}.   Section \ref{s:questions} lists some related open questions, and Section \ref{s:appendix} is an appendix that proves Proposition \ref{t:amalgamation}, which explains how high distance gluings ensure that minimal genus Heegaard surfaces are amalgamations.


\section{Heegaard splittings and amalgamations}
\label{s-hs-amalg}

\begin{dfn}
Consider a 3-ball $B$, and attach 1-handles to $\partial B$. The resulting 3-manifold is a {\em handlebody}. Alternatively, let $F$ be a closed, not necessarily connected, orientable surface such that each component of $F$ has genus greater than zero. Take the product $F \times [0,1]$ and attach 1-handles along $F \times \{ 1 \}$. Assuming it is connected, the resulting 3-manifold $V$ is a {\em compression body}, and we denote $\partial_- V = F \times \{0\}$ and $\partial_+ V = \partial V - \partial_- V$. (We will consider a handlebody as a compression body with $\partial_-V = \emptyset$.)
\end{dfn}

Let $M$ denote a compact, connected, orientable 3-manifold.

\begin{dfn}
A {\em Heegaard splitting} for $M$ is a decomposition $M = V \cup W$ where $V$ and $W$ are compression bodies such that $\partial_+ V = \partial_+ W = V \cap W$. The surface $H = \partial_+ V = \partial_+ W$ in $M$ is called a {\em Heegaard surface}, and when needed we may include this surface in the notation for the Heegaard splitting as $V \cup_H W$. The {\em genus} of $V \cup_H W$ is the genus of $H$, denoted $g(H)$.
\end{dfn}

\begin{rmk}
Note that the compression bodies $V$ and $W$ bipartition the boundary of $M$ into $\partial_V M = \partial M \cap V = \partial_-V$ and $\partial_W M = \partial M \cap W = \partial_- W$. In particular, a Heegaard splitting for $M$ always induces a bipartition $\{ \partial_V M | \partial_W M\}$ of the boundary components of $M$, and thus it is proper to say that $V \cup W$ is a Heegaard splitting of $M$ with respect to the bipartition $\{ \partial_V M | \partial_W M\}$.
\end{rmk}

Given $M$, one can find Heegaard splittings of $M$ in several ways. For example, if $M$ is triangulated with $t$ tetrahedra, then one can obtain a Heegaard splitting of $M$ of genus $t+1$, taking the boundary of a regular neighborhood of the 1-skeleton as the Heegaard surface. Alternatively, if $M$ can be decomposed as a union of submanifolds $M = \bigcup M_i$, so that $M$ is obtained by gluing the $M_i$ together along their boundary components (including possible self-gluings), one can potentially {\em amalgamate} Heegaard splittings of the $M_i$ to form a Heegaard splitting of $M$:

\begin{ex}
\label{ex:amalgamation}
Let $M_1$ and $M_2$ be 3-manifolds such that $\partial M_1 \cong \partial M_2 \cong F$, and let $V_1 \cup W_1$ be a Heegaard splitting of $M_1$ with respect to the bipartition $\{ \emptyset | \partial M_1\}$ and $V_2 \cup W_2$  a Heegaard splitting of $M_2$ with respect to the bipartition $\{ \partial M_2 | \emptyset\}$. Note that both $W_1$ and $V_2$ are compression bodies of the form $F \times [0,1] \cup \{\mbox{1-handles}\}$. Form the 3-manifold $M$ by gluing $M_1$ to $M_2$ along their boundaries, and, abusing notation slightly, let $F$ be the image of the boundary components in $M$. Collapse the product structures in $W_1$ and $V_2$ so that in each, $F \times [0,1]$ is mapped to $F \times \{ 0 \} = F$, and so that the 1-handles of each of $W_1$ and $V_2$ are attached disjointly on $F$. We then obtain a new Heegaard splitting $V \cup W$ of $M$, where $V = V_1 \cup \{\mbox{1-handles in $V_2$}\}$, and $W = \{\mbox{1-handles in $W_1$}\} \cup W_2$. The splitting $V \cup W$ is called the {\em amalgamation} of $V_1 \cup W_1$ and $V_2 \cup W_2$ along $F$. See Figure~\ref{fig:amalgamation}.
\end{ex}

\begin{figure}
\psfrag{p}{$M_1$}
\psfrag{q}{$M_2$}
\psfrag{M}{$M$}
\psfrag{a}{$V_1$}
\psfrag{c}{$V_2$}
\psfrag{b}{$W_1$}
\psfrag{d}{$W_2$}
\psfrag{V}{$V$}
\psfrag{W}{$W$}
\psfrag{f}{$\partial M_1$}
\psfrag{h}{1-handles}
\psfrag{g}{$F$}
   \centering
   \includegraphics[width=5in]{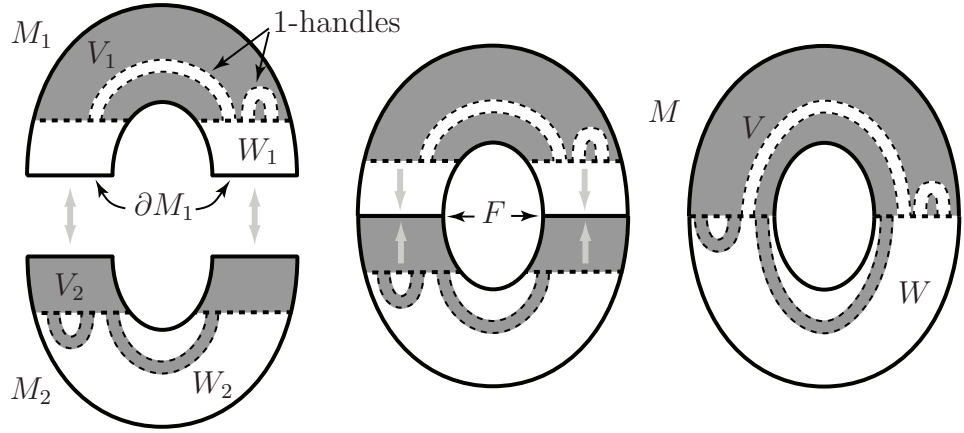}
   \caption{A schematic for the amalgamation given in Example~\ref{ex:amalgamation}. The light and dark regions represent compression bodies, with $W_1$ and $V_2$ expressed as $F \times [0,1] \cup (\mbox{1-handles})$. The dotted lines represent Heegaard surfaces.}
   \label{fig:amalgamation}
\end{figure}

Constructing an amalgamation of $M = \bigcup M_i$ from component Heegaard splittings of $M_i$, however, is not always possible.

\begin{ex}
\label{ex:non-amalgamation1}
Suppose $M$ is formed by taking $M_1 = T^2 \times [0,1]$ and gluing the two components of $\partial M_1$ together. Let $F$ be the image of $\partial M_1$ (an embedded torus) in $M$.

It is well known that $M_1$ admits two irreducible Heegaard surfaces up to isotopy \cite{Scharlemann-Thompson}:  a ``Type 1'' surface that is a level torus $T^2 \times \{\frac 1 2\}$ and induces the non-trivial bipartition of boundary components $\{ T^2 \times \{0\} |T^2 \times \{1\} \}$, and a ``Type 2'' surface that is a genus two Heegaard surface obtained by tubing together two disjoint copies, say $T^2 \times \{\frac 1 4\}$ and $T^2 \times \{\frac 3 4\}$,  of the level surface. Note that this latter surface induces the trivial bipartition of boundary components $\{ T^2 \times \{0\}, T^2 \times \{1\} | \emptyset\}$.

One {\em cannot} form an amalgamated splitting for $M$ by taking a Type 2 Heegaard splitting of $M_1$ and amalgamating it to itself. (See Figure~\ref{fig:non-amalgamations1}(a).) This is because in attempting to apply the construction of Example~\ref{ex:amalgamation}, we do not end up with two resulting compression bodies once we collapse the product structure of $F \times [0,1]$ (i.e.~the resulting ``Heegaard surface'' is not separating).
\end{ex}

\begin{ex}
\label{ex:non-amalgamation2}
Let $M_1$ and $M_2$ each be copies of $T^2 \times [0,1]$, and form $M = M_1 \cup M_2$ by gluing $\partial M_1$ to $\partial M_2$ component-wise. Let $F = \partial M_1 = \partial M_2$, so that $F$ consists of two disjoint tori embedded in $M$. Then, one cannot form an amalgamated Heegaard splitting of $M$ from Type 1 Heegaard splittings of $M_1$ and $M_2$. (See Figure~\ref{fig:non-amalgamations1}(b).) The issue here is that the Heegaard splitting of $M_i$, $i = 1,2$, does not partition the components of $\partial M_i$ into a single compression body, and thus one cannot simultaneously collapse the product structure $F \times [0,1]$ along each component of $F$ as in Example~\ref{ex:amalgamation} to form an amalgamation.
\end{ex}

\begin{figure}
\psfrag{a}{(a)}
\psfrag{b}{(b)}
\psfrag{g}{$\mathcal G$}
   \centering
   \includegraphics[width=3.5in]{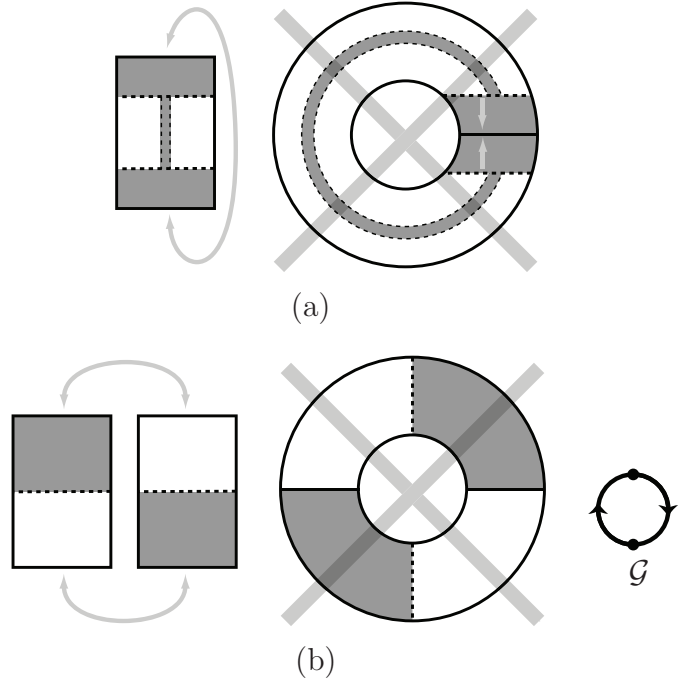}
   \caption{(a) A Type 2 Heegaard splitting of $T^2 \times [0,1]$ cannot be amalgamated to itself; (b) two Type 1 Heegaard splittings of $T^2 \times [0,1]$ cannot be amalgamated together (note that $\mathcal G$ here is not a DAG).}
   \label{fig:non-amalgamations1}
\end{figure}

Assume that $M = \bigcup M_i$ where the $M_i$ meet along boundary components. Rather than thinking of the $M_i$ in a linear order, it is more natural to consider the following construction. Let $\mathcal G$ be the dual graph of $\bigcup M_i$, so that each submanifold $M_i$ is assigned a vertex $x$, and two vertices corresponding to $M_i$ and $M_j$ are connected by an edge for each component of $\partial M_i \cap \partial M_j$. (Note that $i$ may equal $j$, in the case of self-gluings.) Relabelling the submanifolds $M_i$ as $M_x$, one for each vertex $x$ of $\mathcal G$, we can consider $M = \bigcup_{x \in \mathcal G} M_x$. The following definition provides the conditions under which Heegaard splittings of the $M_x$ can form an amalgamated Heegaard splitting of $M$.

\begin{dfn}
\label{dfn:GHS}
A {\it generalized Heegaard splitting} of $M = \bigcup_{x \in \mathcal G} M_x$ is a choice, for each $M_x$, of a Heegaard splitting $M_x = V_x \cup W_x$, so that:

\begin{enumerate}
\item The compression bodies are bicolored ``black'' and ``white'' (or ``V'' and ``W''). That is $V_x \cap V_{x'} = \emptyset, W_x \cap W_{x'} = \emptyset$, for all $x \neq x'$.
\item Given this bicoloring, the graph $\mathcal G$ becomes a directed acyclic graph (DAG) after assigning edges of $\mathcal G$ to point toward ``white'': as each edge $e$ of $\mathcal G$ is dual to a surface in $M$ that has a black compression body $V_x$ on one side and a white compression body $W_{x'}$ on the other, assign an orientation to $e$ that points from $x$ to $x'$ (``black'' to ``white''). We require that the resulting directed graph has no directed cycles. 	
\end{enumerate}
\end{dfn}

\begin{thm}
\label{thm:GHS}
If $\bigcup_{x \in \mathcal G} \left(V_x \cup W_x\right)$ is a generalized Heegaard splitting of $M = \bigcup_{x \in \mathcal G} M_x$, then the Heegaard splittings $V_x \cup W_x$ can be amalgamated to form a Heegaard splitting of $M$.
\end{thm}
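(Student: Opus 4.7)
The plan is to proceed by induction on the number of edges of $\mathcal{G}$. The base case $|E(\mathcal{G})| = 0$ is immediate: since $M$ is connected, $\mathcal{G}$ is a single vertex $x$, and the given splitting $V_x \cup_{H_x} W_x$ is already a Heegaard splitting of $M = M_x$. For the inductive step I need to pick an edge along which to amalgamate in such a way that the resulting merged graph remains a DAG. The tool is the observation that every finite DAG with at least one edge contains a \emph{non-transitive} edge, i.e., an edge $e : x_0 \to x_1$ admitting no other directed path from $x_0$ to $x_1$. Such an edge arises by taking any edge on a longest directed path in $\mathcal{G}$: if that edge were transitive, splicing in the alternate path would yield a strictly longer directed path, contradicting maximality.

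Fixing such a non-transitive edge $e : x_0 \to x_1$, let $F_1, \dots, F_k$ be all the common boundary components of $M_{x_0}$ and $M_{x_1}$, corresponding to the parallel edges between $x_0$ and $x_1$ in $\mathcal{G}$. The bicoloring hypothesis places each $F_i$ in $\partial_- V_{x_0} \cap \partial_- W_{x_1}$, so the product-plus-1-handle structure required by Example~\ref{ex:amalgamation} is in place on both sides of every $F_i$. I would apply that amalgamation simultaneously along $F_1, \dots, F_k$ using pairwise disjoint collars, producing a Heegaard splitting $V_{01} \cup_{H_{01}} W_{01}$ of $M_{01} = M_{x_0} \cup M_{x_1}$ in which $V_{01}$ incorporates $V_{x_0}$ together with the 1-handles of $V_{x_1}$ rerouted through the collapsed $W_{x_1}$-collars, and symmetrically for $W_{01}$. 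Collapse $x_0$ and $x_1$ to a single vertex $x_{01}$ carrying the splitting $V_{01} \cup W_{01}$, delete the edges $e_1, \dots, e_k$, and keep all remaining edges with their original orientations, to obtain a smaller graph $\mathcal{G}'$.

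It remains to verify that $\mathcal{G}'$ still satisfies the hypotheses of the theorem. The bicoloring is inherited directly from $\mathcal{G}$. For the DAG condition, any directed cycle in $\mathcal{G}'$ must pass through $x_{01}$ and therefore lifts to $\mathcal{G}$ as one of the following, each of which is ruled out: a cycle contained in $\{x_0, x_1\}$ (forbidden by the original DAG); a path from $x_1$ back to $x_0$ through intermediate vertices (which would close with $e$ into a cycle in $\mathcal{G}$); a path from $x_0$ to $x_1$ through intermediate vertices (forbidden by non-transitivity of $e$); or a cycle through just $x_0$ or just $x_1$ (again forbidden). The inductive hypothesis applied to the generalized Heegaard splitting on $\mathcal{G}'$ then yields a single Heegaard splitting of $M$. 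The chief obstacle is precisely this choice of edge: naively amalgamating at an arbitrary edge can introduce directed cycles into $\mathcal{G}'$, echoing how the failure to amalgamate in Example~\ref{ex:non-amalgamation2} stems from the underlying graph not being a DAG in the first place. The topological step itself is entirely local at the gluing surfaces and is handled by the construction of Example~\ref{ex:amalgamation}.
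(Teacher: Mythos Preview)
Your proof is correct and complete, but it follows a genuinely different route from the paper's. The paper argues via a \emph{topological sort}: since $\mathcal G$ is a DAG it has a sink; peeling off sinks one by one gives a linear order, and then the paper reattaches them in reverse, at each stage amalgamating the current sink $x_0$ simultaneously with \emph{all} of its in-neighbours $x_1,\dots,x_n$ along all incident gluing surfaces. Because every edge at a sink points in, every such surface lies in $W_{x_0}$ on one side and in some $V_{x_i}$ on the other, so the collapse of Example~\ref{ex:amalgamation} applies. The paper then notes that the amalgamated splitting respects the original bicolouring on the remaining boundary, so the process can be iterated.

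Your induction on the number of edges, contracting a single \emph{non-transitive} edge at each step, is a cleaner bookkeeping device: only two blocks are merged at a time, and the key combinatorial fact (``every finite DAG with an edge has an edge lying on no longer directed walk'') is both easy and pleasant. The price is that you must verify the contracted multigraph is still a DAG, which you do correctly; the paper sidesteps this entirely since amalgamating at a sink cannot introduce a cycle. One small point worth tightening: your phrase ``no other directed path from $x_0$ to $x_1$'' should be read as ``no directed path through intermediate vertices'', since parallel edges $x_0\to x_1$ are allowed and you (rightly) contract all of them at once---your longest-path argument and your case analysis already use this interpretation implicitly. Both proofs lean on the same mild extension of Example~\ref{ex:amalgamation} to the situation where only part of the boundary is glued, and both tacitly use that the resulting splitting preserves the black/white colouring of the surviving boundary components.
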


\begin{proof}
We construct the desired Heegaard splitting in stages. Assume that the graph $\mathcal G$ is directed as per Definition~\ref{dfn:GHS}. As $\mathcal G$ contains no directed cycles, the graph has a vertex which is a sink (all edges meeting it point ``in''). Remove this vertex and all edges meeting it from the graph. In the remaining (potentially disconnected) graph, find another sink, and repeat the process. Continue until all such sinks have been removed. As $\mathcal G$ is a DAG, this means we are left only with a collection of vertices (the sources of the original graph).

Now add back the last removed sink $x_0$, along with the edges $e_1, \ldots, e_m$ that point in toward it. Let $x_1, \ldots, x_n$ be the set of vertices that bound the edges $e_1, \ldots, e_m$ along with $x_0$. Since $x_0$ is a sink, the bicoloring of the compression bodies of $\bigcup V_{x_i} \cup W_{x_i}$ in the generalized Heegaard splitting implies $M_{x_0}$ meets each $M_{x_i}$ only in $W_{x_0}$ and $V_{x_i}$, $i=1, \ldots, n$, respectively. In particular, the components $F_{e_1}, \ldots, F_{e_m}$ of $\partial M_{x_0}$ corresponding to the edges $e_1, \ldots, e_m$ are all contained in $W_{x_0}$ and $\bigcup V_{x_i}$. Thus, we may carry out the procedure of Example~\ref{ex:amalgamation} and collapse the product structures $F_{e_j} \times [0,1]$ to $F_{e_j}$ simultaneously for all $j$ in the compression bodies $W_{x_0}$, $V_{x_1}, \ldots, V_{x_n}$ and obtain a new Heegaard splitting $V' \cup W'$ of $M' = M_{x_0} \cup \ldots \cup M_{x_n}$. Note that this new Heegaard splitting preserves the original bicoloring given by $\bigcup V_x \cup W_x$ for boundary components of $M'$: if $F'$ is a component of $\partial M'$, then $F' \subset \partial V'$ if and only if $F' \subset \partial V_{x_i}$ for some $x_i$. (Boundary components of $M'$ stay ``black'' or ``white.'')

Add back in the next sink $x_0'$. If $M_{x_0'}$ does not meet $M'$, then we simply repeat the above process for the subset of $\mathcal G$ that  consists of edges and bounding vertices that meet $x_0'$. If $M_{x_0'}$ meets $M'$, then we consider $M'$ as a whole with the Heegaard splitting $V' \cup W'$ obtained above. Since $V' \cup W'$ preserves the bicoloring of boundary components of $M'$ given by the original generalized Heegaard splitting, we can repeat the above process to obtain a new Heegaard splitting of $M_{x_0'} \cup M' \cup \{ M_y \ |\  y \ \mbox{is a new vertex directed towards $x_0'$}\}$.

Building in this way, we can continue to obtain new Heegaard splittings of larger collections of submanifolds of $M$, until we complete the graph $\mathcal G$ and produce a Heegaard splitting $V \cup W$ of $M$.
\end{proof}

As before, the Heegaard splitting $V \cup W$ obtained in the above proof is called the amalgamation of the Heegaard splittings of the $M_x$ along the surfaces $F$, where $F$ is the collection of components of the $\partial M_x$ that are dual to edges in $\mathcal G$ (i.e.~ $F = \left( \bigcup_{x \in \mathcal G} \partial M_x \right) \setminus \partial M$). Note that $V \cup W$ is obtained by sequential applications of the technique in Example~\ref{ex:amalgamation} to amalgamations of Heegaard splittings of ``sink'' submanifolds to their adjacent submanifolds. The critical feature of a generalized Heegaard splitting that allows one to construct $V \cup W$ is that each component Heegaard splitting bipartitions the boundary components of the $M_x$ suitably so that we can bicolor the set of compression bodies (this allows us to end up with two compression bodies in the amalgamated Heegaard splitting, avoiding the problem of Example~\ref{ex:non-amalgamation1}), and can use the bicoloring to direct the edges of $\mathcal G$ so that we can amalgamate in sequence ``outward'' from sinks at each stage (thereby avoiding the problem of Example~\ref{ex:non-amalgamation2} --- recall Figure~\ref{fig:non-amalgamations1}).

\begin{thm}
\label{t:GenusCount}
Suppose $\bigcup_{x \in \mathcal G} \left(V_x \cup_{H_x} W_x\right)$ is a generalized Heegaard splitting of $M = \bigcup_{x \in \mathcal G} M_x$. For every edge $e$ of $\mathcal G$, let $F_e$ denote the component of $\bigcup \partial M_x$ dual to $e$ in $M$. Let $V \cup_H W$ be the amalgamation of $\bigcup_{x \in \mathcal G} \left(V_x \cup_{H_x} W_x\right)$. Then $$g(H) = \displaystyle{\sum_{x \in \mathcal G} g(H_x) - \sum_{e \in \mathcal G} g(F_e) + 1 - \chi(\mathcal G)}.$$
\end{thm}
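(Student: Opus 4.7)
The plan is to induct on the number of edges of $\mathcal{G}$, processing the amalgamation one edge at a time and tracking Euler characteristic. It is cleaner to first verify the equivalent Euler-characteristic form of the identity,
\[ \chi(H) = \sum_{x \in \mathcal{G}} \chi(H_x) - \sum_{e \in \mathcal{G}} \chi(F_e), \]
since that form eliminates the $\chi(\mathcal{G})$ term. The stated genus formula then follows by substituting $g = (2-\chi)/2$ for each of the connected surfaces $H$, $H_x$, $F_e$, together with $\chi(\mathcal{G}) = n - m$ (where $n$ and $m$ are the numbers of vertices and edges of $\mathcal{G}$), and simplifying.

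The base case $m = 0$ is a single vertex with $H = H_x$, and the identity is immediate. For the inductive step, I would choose an edge $e$ processed last in the sink-by-sink amalgamation in the proof of Theorem~\ref{thm:GHS}, so that the amalgamation over $\mathcal{G} - e$ can be carried out first and produces a Heegaard splitting $V' \cup_{H'} W'$ of the manifold $M'$ obtained by cutting $M$ open along $F_e$. The inductive hypothesis then gives $\chi(H') = \sum_x \chi(H_x) - \sum_{e' \neq e} \chi(F_{e'})$, so it remains to show $\chi(H) = \chi(H') - \chi(F_e)$ in each of the two possible sub-cases.

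If $\mathcal{G} - e$ is disconnected, then $e$ is a bridge: $M'$ is the disjoint union $M'_1 \sqcup M'_2$ with Heegaard surfaces $H'_1$, $H'_2$, and the final amalgamation across $F_e$ is exactly the two-piece construction of Example~\ref{ex:amalgamation}; the classical Schultens formula gives $\chi(H) = \chi(H'_1) + \chi(H'_2) - \chi(F_e)$. If instead $\mathcal{G} - e$ is still connected, then $e$ lay on a cycle of length at least two, $M'$ is connected with two new boundary components $F^+$ and $F^-$, and the DAG hypothesis guarantees $F^+ \subset \partial_- V'$ and $F^- \subset \partial_- W'$. Running the collar-collapse procedure of Example~\ref{ex:amalgamation} on these two collars (now lying in the same manifold $M'$) and simultaneously identifying $F^+ \sim F^-$ produces the amalgamation $V \cup_H W$, and a parallel bookkeeping of 1-handles and product collars again yields $\chi(H) = \chi(H') - \chi(F_e)$.

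The step I expect to be the main obstacle is the self-gluing sub-case. Although the mechanics of Example~\ref{ex:amalgamation} carry over verbatim, one must check that collapsing two collars inside the same manifold $M'$ and then identifying their bases really does produce two genuine compression bodies whose positive boundaries coincide along a single connected surface. The DAG hypothesis, which places $F^+$ and $F^-$ on opposite sides of $H'$, is precisely what prevents the pathologies illustrated in Examples~\ref{ex:non-amalgamation1} and~\ref{ex:non-amalgamation2}. Once this is verified, the Euler characteristic change matches the tree-edge case, and the induction closes.
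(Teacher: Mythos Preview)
Your Euler-characteristic reformulation is correct and genuinely simplifies the bookkeeping, and the bridge-edge case goes through exactly as you say. However, your induction is organized differently from the paper's, and the difference is precisely where your ``main obstacle'' lies.

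The paper does not induct on edges. It follows the sink-by-sink construction of Theorem~\ref{thm:GHS} and computes the genus increment each time a \emph{vertex} is added back, using the handle number $\#_{handle}(V) = g(\partial_+ V) - \sum_{F \subset \partial_- V} g(F) + |\partial_-V| - 1$. The point is that each such step attaches a \emph{new} block $M_y$ to an already-amalgamated piece $M'$ along all the edges from $M'$ to $y$ simultaneously; this is always a two-piece amalgamation in the sense of Example~\ref{ex:amalgamation} (possibly along several surfaces at once), never a self-gluing. Cycles in $\mathcal G$ are absorbed invisibly, because all edges entering the new sink $y$ are handled in one stroke even when several of them come from the same component of $M'$. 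So the paper's route simply never meets your obstacle.

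Your cycle-edge sub-case, by contrast, is a genuine extra lemma, and it does not follow ``verbatim'' from Example~\ref{ex:amalgamation}. Once you glue $F^+ \sim F^-$, the surface $H'$ becomes \emph{non-separating} in $M$: there is now a path from the $V'$ side to the $W'$ side through $F$, so $M \setminus H'$ is connected and $H'$ is not a Heegaard surface at all. The correct amalgamated surface is obtained, roughly, by tubing $H'$ to a pushoff of $F$ along a vertical arc in one of the collars; one then has to check that both complementary pieces are honest compression bodies and that the resulting genus is $g(H') - g(F) + 1$. This is true and not hard, but it is a separate construction you would need to supply --- the collar-collapse picture alone does not produce it. If you want to keep the edge-by-edge induction, that lemma is the missing ingredient; alternatively, reorganizing the induction vertex-by-vertex as the paper does makes it unnecessary.
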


\begin{proof}
Proceed with the same setup and notation as in the proof of Theorem~\ref{thm:GHS}. In particular, for the first step in constructing an amalgamation of $M$, consider Heegaard splittings $V_{x_i} \cup_{H_{x_i}} W_{x_i}$ of $M_{x_i}$, $i=0, \ldots, n$, respectively, and their corresponding vertices $x_0, \ldots, x_n$ and connecting edges $e_1, \ldots, e_m$ in $\mathcal G$. Let $F_{e_1}, \ldots, F_{e_m}$ denote the corresponding surfaces in $M$ dual to $e_1, \ldots, e_m$. Let $M' = \bigcup_{i = 0}^n M_{x_i}$.

By construction, the genus of the amalgamated Heegaard splitting is obtained by adding the genus of $H_{x_0}$ to the {\em handle numbers} of $V_{x_i}$, $i = 1, \ldots, n$. If $V$ is a compression body, then the handle number of $V$ is the number of 1-handles added to $\partial_-V \times [0,1]$ along $\partial_-V \times \{1\}$ to obtain $V$ (see Figure~\ref{fig:handle_number}). There are two types of potential such 1-handles: a minimal set that connects components of $\partial_- V \times [0,1]$ (essentially fulfilling the role of ``connected sum" of components of $\partial_- V \times \{1\}$), and those that increase the genus of $\partial_+ V$. Thus, the handle number of $V$ equals $$\#_{handle}(V) = g(\partial_+ V) - \sum_{F \in \partial_- V} g(F) + |\partial_- V| - 1.$$

\begin{figure}
   \centering
  \includegraphics[width=3in]{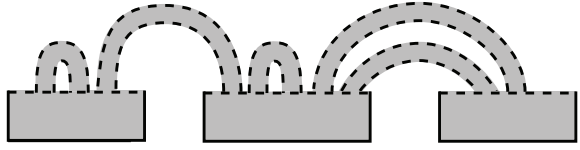}
   \caption{A schematic of a compression body $V$ with $\#_{handle}(V) = 5$. (Note that $\partial_+ V$ is denoted by dotted lines.)}
   \label{fig:handle_number}
\end{figure}

Let $V' \cup_{H'} W'$ be the amalgamation of $\bigcup_{i=0}^n \left( V_{x_i} \cup_{H_{x_i}} W_{x_i} \right)$. Using the handle number, the genus of the Heegaard surface $H'$ is
$$g(H') = g(H_{x_0}) + \sum_{i=1}^n \#_{handle}(V_{x_i}).$$
\noindent Plugging in the equations for the handle numbers for the $V_{x_i}$ produces

$$g(H') = g(H_{x_0}) + \sum_{i=1}^n g(H_{x_i}) - \sum_{j=1}^m g(F_{e_j}) + \left|\bigcup_{j=1}^m F_{e_j}\right| - n$$
$$ = \sum_{i = 0}^n g(H_{x_i}) - \sum_{j=1}^m g(F_{e_j}) + m - n.$$

Let $\mathcal G'$ denote the graph connecting $x_0$ to $x_1, \ldots, x_n$. Since $m$ is the number of edges in $\mathcal G'$ and $n$ is the number of vertices minus one, we conclude $m - n = 1 - \chi(\mathcal G')$. Hence
$$g(H') = \sum_{i=0}^n g(H_{x_i}) - \sum_{j=1}^m g(F_{e_j}) + 1 - \chi(\mathcal G').$$

For any new submanifold that is included in the amalgamation at a subsequent stage, the above relationship is preserved. That is, suppose that $M' = V' \cup_{H'} W'$ has already been obtained as above by amalgamating component Heegaard splittings, and suppose $M_y = V_y \cup_{H_y} W_y$ is a submanifold and Heegaard splitting being newly amalgamated to $V' \cup_{H'} W'$ along surfaces $F_{e_1'}, \ldots, F_{e_{m'}'}$. Let $\mathcal G'$ and $\mathcal G_{y}'$ be the dual graphs for $M'$ and $M' \cup M_y$, respectively. Repeating the above argument implies that the genus of the resulting amalgamation of $M' \cup M_y$ increases by
$$g(H_y) - \sum_{k=1}^{m'} g(F_{e_k'}) + m' - 1.$$
\noindent Note that $m'$ is the number of edges of $\mathcal G_y' \setminus \mathcal G'$, and so $m' - 1 = -\chi(\mathcal G_y' \setminus \mathcal G')$. In particular, this means that $m - n + m' - 1 = 1 - \chi(\mathcal G') -\chi(\mathcal G_y' \setminus \mathcal G') = 1 - \chi(\mathcal G_y').$ Thus, the resulting genus of the amalgamation of $M' \cup M_y$ is
$$\sum_{x \in \mathcal G_y'} g(H_x) - \sum_{e \in \mathcal G_y'} g(F_e) + 1 - \chi(\mathcal G_y').$$

\noindent Amalgamating thusly along all remaining submanifolds $M_{y'}$, $y' \in \mathcal G$, produces the desired result.

\end{proof}

It is important to note that one can find examples of (minimal genus) Heegaard splittings of 3-manifolds that are not amalgamations. For example, by gluing the bridge surface of a tunnel number $n-1$, $n$-bridge knot complement to vertical annuli in a Seifert fibered space over a disk with $n$ exceptional fibers, one can obtain a Heegaard surface of the resulting 3-manifold of genus $n$, whereas the minimal genus amalgamation along the gluing surface has genus $2n$. (See~\cite{Schultens-Weidmann}.) Note that this Heegaard surface results from a very specific gluing map between the boundary components of the two submanifolds. In general, gluing maps between boundary components can be chosen to be ``sufficiently complicated'' to ensure that all minimal genus Heegaard splittings are amalgamations along the gluing surfaces. (See the appendix.) Exploiting this property in the next sections allows us to ensure that the minimal genus Heegaard splittings of our constructed 3-manifolds $M_Q$ are amalgamations, to which we can thus apply the results of this section.

\section{Constructing $M_Q$}
\label{s:constructing}

In this section we give a recipe for producing $M_Q$ from $Q$ and prove the following result.

\begin{pro}
\label{p-q-mq}
Let $Q$ be an instance of {\sc CNF-SAT}.  Then there is a manifold $M_Q$ with Heegaard genus $g(M_Q) \geq \LQ + 2$, with equality  holding if and only if $Q$ has a satisfying assignment.
\end{pro}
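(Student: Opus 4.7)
The plan is to construct $M_Q$ as a union of ``block'' manifolds, one per term of $Q$, glued along torus boundary components via sufficiently high-distance homeomorphisms. For each term type --- \bvar, \brep, \bnot, \band, and \bor --- I would exhibit a specific genus-two block whose minimal-genus Heegaard splittings realize exactly the bipartitions of its boundary tori that correspond to the truth-table behavior of the operator. The \bvar, \brep, \bnot, and \band~blocks each have a unique bipartition forced at minimal genus, whereas the \bor~block is flexible, realizing every non-trivial bipartition of its boundary tori. Assembling the blocks along the parse tree of $Q$ produces a dual graph $\mathcal G$ that mirrors the structure of the formula.

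By Proposition~\ref{t:amalgamation} from the appendix, the high-distance gluings force every minimal-genus Heegaard splitting of $M_Q$ to be an amalgamation of block Heegaard splittings, arising from some generalized Heegaard splitting in the sense of Definition~\ref{dfn:GHS}. The genus of any such amalgamation is given by Theorem~\ref{t:GenusCount} as
\[ g(H) \;=\; \sum_{x \in \mathcal G} g(H_x) \;-\; \sum_{e \in \mathcal G} g(F_e) \;+\; 1 \;-\; \chi(\mathcal G). \]
Taking each block at minimal genus ($g(H_x)=2$) and noting that every gluing surface is a torus, this reduces to $|V(\mathcal G)|+1$, and I would arrange the number and connectivity of the blocks so that this value equals $\LQ+2$. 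Since using a non-minimal block splitting at any vertex strictly increases the total, one obtains $g(M_Q) \geq \LQ+2$, with equality if and only if there exists a generalized Heegaard splitting in which every block carries a minimal-genus splitting.

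The remaining task is to identify this combinatorial condition with satisfiability of $Q$. Given a satisfying assignment, I would propagate truth values through the parse tree and, at each block, select the minimal-genus Heegaard splitting whose boundary bipartition assigns the $V$ (``black'') side to tori carrying true sub-statements and the $W$ (``white'') side to those carrying false ones. The rigidity of the \bvar, \brep, \bnot, and \band~blocks makes this choice forced; the flexibility of the \bor~blocks accommodates every true disjunctive configuration; and the parse-tree structure of $\mathcal G$ guarantees that the induced ``black-to-white'' orientation is acyclic, producing a generalized Heegaard splitting that amalgamates to a genus-$(\LQ+2)$ Heegaard surface. Conversely, any genus-$(\LQ+2)$ Heegaard splitting must be such a minimal-block amalgamation; its bicoloring assigns truth values to every sub-statement of $Q$ in a way that is forced to be logically consistent at every non-\bor~block and truthful at every \bor~block, so the values read off the variable blocks yield a satisfying assignment.

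The main obstacle I anticipate is the construction of the blocks themselves: exhibiting explicit genus-two 3-manifolds whose minimal-genus Heegaard splittings induce precisely the prescribed sets of bipartitions on their boundary tori, neither missing any required bipartition nor acquiring any extra one. The \bor~block is the most delicate, as it must realize an entire family of non-trivial bipartitions at minimal genus without introducing additional splittings that would undermine the encoding at non-\bor~nodes. Once suitable blocks have been exhibited (a task for the next section), the proposition reduces to the genus-formula bookkeeping and DAG combinatorics sketched above.
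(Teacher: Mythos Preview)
Your approach matches the paper's, but two steps as written would not go through. First, the bicoloring rule ``true tori to $V$, false to $W$'' is inconsistent: at a \brep~block carrying a true variable, all three boundary tori would have to land in $V$, yet the unique minimal splitting separates the input from both outputs. The rule that actually works makes the color on the side of a gluing torus facing into a block depend on both the truth value \emph{and} whether that torus is an input or output there (a true input faces white, a true output faces black, and conversely for false); this is precisely what forces opposite colors across each gluing surface. Second, $\mathcal G$ is not a tree---the \brep~blocks create undirected cycles whenever a variable occurs more than once---so acyclicity of the directed graph does not follow from ``parse-tree structure.'' Under the corrected coloring, edges can point either up or down depending on truth values; one must argue that a directed cycle would have a local maximum, necessarily at a \brep~block, and then use the rigidity of that block (both outputs lie in the same compression body, hence both incident edges point the same way) to rule this out.

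A smaller issue in the converse: an individual \bor~block does not enforce its truth table (for instance the bipartition separating $A$ from $\{B, A\lor B\}$ is compatible with $A$ true, $B$ false, $A\lor B$ false), so ``truthful at every \bor~block'' is too strong. The paper argues instead about each maximal \bor-tree as a unit, propagating the true value at its output up to at least one input while tolerating inconsistent labels in the interior. Your difficulty assessment is also inverted: the \bor~block is the easy one (it is $(\text{pair of pants})\times S^1$, and every non-trivial bipartition is realized automatically); the genuine work is in forcing a \emph{unique} bipartition for the \band/\brep~block (via twisted torus links) and in the Dehn-filling argument needed for the \bnot~block.
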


Recall that \LQ~is the length of $Q$ without counting parentheses.


\subsection{Constructing $M_Q$}


The sentence $Q$ will guide our construction of $M_Q$. To begin, rewrite $Q$ by inserting parentheses, if necessary, to make it clear how each logical connective joins exactly two terms (i.e.~$Q$ is made fully parenthesized). The manifold $M_Q$ is then constructed out of building blocks according to instructions provided by this modified version of $Q$. Each building block will have Heegaard genus 2 and some number of torus boundary components. Each such boundary component will be labelled with a subsentence of $Q$, and also be designated as either an {\it input} or an {\it output} to that block. We will depict such blocks so that the input boundary component is on top, and the outputs are on the bottom. See Figure \ref{f:building_blocks}. Each block is chosen based on a desired bipartitioning of its boundary components by genus 2 Heegaard splittings as follows.

\begin{figure}
\psfrag{a}{$a$}
\psfrag{b}{$\lnot a$}
\psfrag{B}{$B$}
\psfrag{A}{$A$}
\psfrag{C}{$A \land B$}
\psfrag{D}{$A \lor B$}
\psfrag{Q}{$Q$}
\psfrag{v}{\bvar}
\psfrag{r}{\brep}
\psfrag{n}{\bnot}
\psfrag{d}{\band}
\psfrag{o}{\bor}
\psfrag{e}{\bend}
\includegraphics[width=4in]{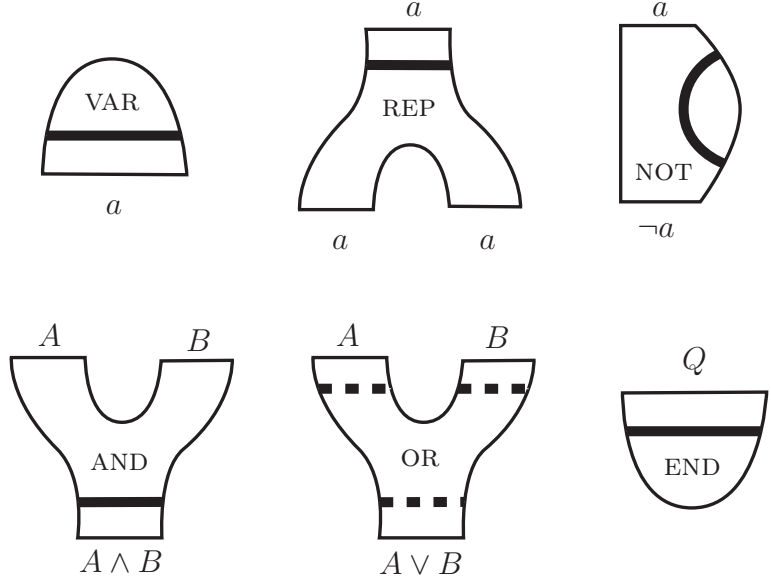}
\caption{Schematics indicating block types and their labelings. Input surfaces are depicted at the top of each block, and outputs at the bottom. Minimal genus Heegaard surfaces are depicted with bold lines (with the three possible such splittings of the \bor~ block indicated with bold dashed lines).}
\label{f:building_blocks}
\end{figure}

\begin{itemize}
		\item \bvar(iable) -  For each distinct variable in $Q$ let the block manifold $M$ be a trefoil knot exterior (Figure \ref{f:trefoilChain}).  Then  $M$ has one torus boundary component, $\partial M = T$, and any genus 2 Heegaard splitting induces the only boundary bipartition possible (up to ordering), $\{T | \emptyset\}$.  We label the boundary component $T$ with the corresponding variable, and consider it an output of the block.

\item \brep(licate) - To create multiple copies of a given variable, we use a block manifold $M$ that is the exterior of the twisted torus link in Figure \ref{f:t7176}.   Then $M$ has three torus boundary components, $\partial M = T_0 \cup T_1 \cup T_2$ where any genus two Heegaard splitting induces the boundary bipartition $\{ T_0, T_1 | T_2\}$ (Lemma \ref{l-twisted-link}).   All three components will be labelled with the variable that is being duplicated.  We will say the boundary component $T_2$ is {\it preferred}, and will be the input. The other two boundary components are outputs.

\item \bnot~ - For each occurrence of ``$\neg a$" in $Q$, the block manifold $M$ will be a high distance filling on the twisted torus link as described in Lemma \ref{l-twisted-link-filled}.  Then $\partial M = T_0 \cup T_1$ and any genus two Heegaard splitting induces the bipartition $\{ T_0,T_1 | \emptyset \}$.  Label one boundary component $a$, and consider it an input. The other boundary component is labelled $\lnot a$ and is considered an output. Glue the input surface to the output of a \brep~block corresponding to $a$.
\end{itemize}

Once we have created  one labeled output surface for each instance of each variable in $Q$, and each instance of its negation, we start gluing them to other kinds of blocks determined by the logical structure of $Q$, as follows:	

\begin{itemize}

\item \band~- For each conjunction $A \land B$ in $Q$, we let $M$ be the exterior of the twisted torus link already used for \brep. Then $\partial M = T_0 \cup T_1 \cup T_2$, and all genus two Heegaard splittings all induce the bipartition $\{ T_0,T_1 | T_2\}$ (Lemma \ref{l-twisted-link}). Label the preferred boundary component $T_2$ with the expression $A \land B$, and consider it an output. The other two boundary components are inputs, and are labelled with the expressions $A$ and $B$ respectively.

\item \bor~- For each disjunction $A \lor B$ in $Q$ we let $M$ be the exterior of the three component chain indicated in Figure \ref{f:trefoilChain}.  It is homeomorphic to $\textit{\{pair of pants\}} \times S^1$, has three boundary components $\partial M = T_0 \cup T_1 \cup T_2$, and each of the three boundary bipartitions of the form $\{ T_i, T_j | T_k\}$ is realized by some genus two Heegaard splitting (Lemma \ref{l-pair-of-pants}).  Choose one boundary component to label as $A \lor B$, and consider it an output. The remaining boundary components are inputs, and are labelled with the expressions $A$ and $B$ respectively.

\item \bend~-  We end by capping the statement off with the same $M$, the trefoil knot exterior, used for \bvar.  The manifold $M$ has one torus boundary component, $\partial M = T$, and a single boundary bipartition $\{T | \emptyset\}$.  It is labelled with the entire expression $Q$, and is an input.
\end{itemize}


To glue the blocks, we choose ``sufficiently complicated" maps so that every Heegaard splitting of $M_Q$ of genus less than or equal to $\LQ+2$ is an amalgamation of splittings of the blocks. (See the appendix.)

As an example, Figure \ref{f:MQexample} gives the construction of the manifold $M_Q$ from the expression \[Q=((a \lor c) \land (\lnot a \lor b)) \land (b \lor c).\]



\subsection{Proof of Proposition \ref{p-q-mq}.}

\begin{lem}
\label{l:GenusMin}
The Heegaard genus of $M_Q$ is at least $\LQ+2$, and in the case of equality, any such minimal genus splitting is an amalgamation of minimal genus splittings of the building blocks.
\end{lem}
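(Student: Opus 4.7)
The plan is to combine Proposition~\ref{t:amalgamation} from the appendix with the genus-counting formula of Theorem~\ref{t:GenusCount}. First I will invoke Proposition~\ref{t:amalgamation}: because the blocks are glued along sufficiently high-distance maps, every Heegaard splitting of $M_Q$ of genus at most $\LQ + 2$ arises as an amalgamation of Heegaard splittings of the individual blocks, hence from a generalized Heegaard splitting in the sense of Definition~\ref{dfn:GHS}. For any such amalgamation Theorem~\ref{t:GenusCount} supplies
\[
g(H) = \sum_{x \in \mathcal G} g(H_x) - \sum_{e \in \mathcal G} g(F_e) + 1 - \chi(\mathcal G),
\]
where $\mathcal G$ is the dual graph of the block decomposition.

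Next I plug in the geometric data. Every gluing surface $F_e$ is a torus, so $g(F_e)=1$. By construction $\mathcal G$ is a tree---it mirrors the parse tree of $Q$, fringed at the leaves by \bvar, \brep, and \bnot~ blocks and capped by a single \bend~ block---so $\chi(\mathcal G)=1$ and $E = V - 1$, where $V$ and $E$ are the vertex and edge counts of $\mathcal G$. Each block has Heegaard genus exactly two: the trefoil exterior (\bvar, \bend) has tunnel number one; the twisted torus link exterior (\brep, \band) and its high-distance Dehn filling (\bnot) are genus two by Lemmas~\ref{l-twisted-link} and~\ref{l-twisted-link-filled}; and $\{\text{pair of pants}\}\times S^1$ (\bor) is genus two by Lemma~\ref{l-pair-of-pants}. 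Thus $g(H_x)\geq 2$ for every $x$, and substituting gives
\[
g(H) \;\geq\; 2V - (V-1) + 1 - 1 \;=\; V + 1.
\]

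It remains to identify $V$ with $\LQ+1$ by a direct count. Let $n$ be the number of distinct variables, $L$ the total number of variable occurrences in $Q$ (the leaves of the parse tree, whether negated or not), and $N,A,O$ the numbers of negations, conjunctions, and disjunctions, so that $\LQ = L + N + A + O$. Producing $k_v$ copies of a variable $v$ requires $k_v - 1$ \brep~ blocks, contributing $\sum_v (k_v - 1) = L - n$ in total, so
\[
V \;=\; n + (L - n) + N + A + O + 1 \;=\; \LQ + 1,
\]
whence $g(H) \geq \LQ + 2$, and equality forces $g(H_x)=2$ for every block, proving the second statement. The real work is concealed in Proposition~\ref{t:amalgamation}: once high-distance gluings are known to force amalgamations, the lemma reduces to the Euler-characteristic bookkeeping above together with the per-block genus estimates supplied by Lemmas~\ref{l-twisted-link}, \ref{l-twisted-link-filled}, and \ref{l-pair-of-pants}.
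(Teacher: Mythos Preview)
Your approach is essentially the same as the paper's, and the final bound is correct, but you make one false assertion along the way: the dual graph $\mathcal G$ is \emph{not} a tree in general. Whenever a variable occurs more than once, the \brep\ chain feeding that variable's occurrences into the parse tree creates cycles. For a concrete example take $Q = a \lor a$: the two outputs of the single \brep\ block are glued to the two inputs of the single \bor\ block, producing a double edge in $\mathcal G$. More generally one checks that $\chi(\mathcal G) = n + 1 - L$ in your notation, which is $1$ only when every variable appears exactly once. (The paper implicitly acknowledges this: the argument in Lemma~\ref{l-sat-min} that the directed graph has no directed cycles would be vacuous if $\mathcal G$ were a tree.)

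Fortunately this mistake is harmless. The key observation---which the paper uses directly and which you obscured by invoking the tree hypothesis---is that the edge contributions cancel regardless of the structure of $\mathcal G$:
\[
\sum_x g(H_x) - \sum_e g(F_e) + 1 - \chi(\mathcal G) \;\geq\; 2V - E + 1 - (V - E) \;=\; V + 1,
\]
since every $F_e$ is a torus and $\chi(\mathcal G) = V - E$ for any graph. So your computation $g(H) \geq V+1$ survives once you drop the tree claim, and the remainder of your argument (the vertex count $V = \LQ + 1$ and the equality case) is correct and matches the paper.
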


\begin{proof}
Let $S$ be a minimal genus Heegaard splitting of $M_Q$. If the genus of $S$ is strictly greater than $\LQ+2$ then the result follows. By way of contradiction, we assume the genus of $S$ is at most $\LQ+2$. By construction, $S$ is then an amalgamation of Heegaard splittings of the building blocks. We now use Theorem \ref{t:GenusCount} to compute the genus of $S$:

$$g(S) =\displaystyle{\sum_{x \in \mathcal G} g(H_x) - \sum_{e \in \mathcal G} g(F_e) + 1 - \chi(\mathcal G)}.$$

Here $\mathcal G$ is the graph dual to the block structure.  Let $v$ be the number of vertices, one for each block, and $e$ the number of edges, one for each gluing torus.  Note that the number of variable occurrences in $Q$ is the number of \bvar~and \brep~blocks. The operators in $Q$ each have a corresponding \bnot, \bor, or \band~block, and there is a final \bend~ block for the total statement $Q$. In particular, $v = \LQ + 1$. Since each block has genus 2, we have $g(H_x) \geq 2$ for each $x$, with equality holding only for those blocks with minimal splittings, and $g(F_e)=1$ for each $e$.  Thus,

$$g(S) \geq  2v - e  + 1 - (-e+v) = v +1 = \LQ + 2.$$
\end{proof}

\begin{lem}
\label{l-min-sat}
If the Heegaard genus of $M_Q$ is equal to $\LQ+2$ then there is a satisfying assignment of $Q$.
\end{lem}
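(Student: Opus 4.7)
The plan is to start with a minimal genus Heegaard splitting $S$ of $M_Q$, so $g(S) = \LQ + 2$, and extract from it a satisfying assignment for $Q$. By Lemma \ref{l:GenusMin}, $S$ is the amalgamation of genus two Heegaard splittings of the building blocks, so by Definition \ref{dfn:GHS} and Theorem \ref{thm:GHS} it is encoded by a bicoloring of the compression bodies in which bodies from different blocks sharing a gluing torus receive opposite colors (call them $V$ and $W$). Each gluing torus $T$ is an output of exactly one block --- its ``producer'' block --- and an input of the other, so I will assign to $T$ the truth value \emph{true} if $T$ lies in the $V$-compression body of its producer and \emph{false} otherwise. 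Taking $\alpha(a)$ to be the truth value of the output torus of the \bvar~block for $a$ then produces a candidate truth assignment for the variables of $Q$.

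The next step is to translate each block's forced bipartition into a constraint on these truth values. With careful sign-tracking (each producer/consumer color flip on a shared torus contributes a negation), a \brep~block preserves the truth values across its three tori; a \bnot~block forces the output value to be the negation of the input value; an \band~block forces its two input tori and its output torus all to share one common truth value; and an \bor~block admits every triple of truth values on its three tori \emph{except} the two configurations in which the two inputs agree but the output disagrees. A short induction on the parse tree of a single clause $c$ (whose internal nodes are all \bor~blocks) then establishes that the truth value at the root of $c$ is forced to equal the logical value of $c$ under $\alpha$ when every literal of $c$ is true under $\alpha$ or when every literal is false, and is otherwise free to be chosen either way.

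Finally, the tree of rigid \band~blocks joining the clauses together forces every clause root torus and the final output torus to share one common truth value $v$. If $v$ is true, then every clause has at least one true literal under $\alpha$, so $\alpha$ satisfies every clause and hence $Q$. If $v$ is false, then no clause has all true literals under $\alpha$, so every clause has at least one false literal; flipping $\alpha$ to its negation $\bar\alpha$ turns each such literal true, so $\bar\alpha$ satisfies every clause of $Q$. Either way $Q$ is satisfiable. The main obstacle I anticipate is the sign-tracking behind the block-by-block translation --- in particular, verifying that the only two triples of truth values the \bor~block forbids are exactly the two logically contradictory ones --- since this is what makes the clause-level induction succeed.
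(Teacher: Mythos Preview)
Your argument is correct and follows the same overall strategy as the paper: pull back a bicoloring from the amalgamated minimal-genus splitting, convert it to truth values on the gluing tori, and check block by block that the forced bipartitions yield a satisfying assignment. Two organizational differences are worth noting. First, the paper normalizes the bicoloring at the outset by declaring the \bend~block's compression body white, so the output torus labelled $Q$ is \emph{true} by fiat; it then argues directly that the resulting assignment $\alpha$ satisfies $Q$. You skip this normalization, let the common value $v$ carried by the \band/\bend~spine be either true or false, and in the $v=\text{false}$ case pass to $\bar\alpha$. Both are fine; the paper's choice is a line shorter, while yours makes the global sign symmetry explicit. Second, your analysis of the \bor~block (enumerating the six allowed triples and running an induction over the clause's parse tree) is more detailed than what the paper actually uses: the paper only needs that a \emph{true} output forces at least one \emph{true} input, and simply traces a true value from the root of each \bor-tree down to a leaf. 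Your stronger statement is correct and supports the two-case ending, but the sign-tracking you flagged as the main obstacle is indeed where care is required, so it would be worth writing out at least one of the block computations in full rather than summarizing.
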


\begin{proof}
Suppose $S$ is a minimal genus Heegaard surface of $M_Q$. If the genus of $S$ is $\LQ+2$, then by the previous lemma $S$ is an amalgamation of minimal genus Heegaard surfaces $\{S_i\}$ in the building blocks.

Because $S$ is an amalgamation, the surfaces $\{S_i\}$, together with the gluing surfaces, separate the manifold $M_Q$ into compression bodies that can be colored ``black" and ``white" so that no two compression bodies with the same color are adjacent. Without loss of generality, we assume the compression body of the \bend~ block which contains its sole input surface is colored white.

We will now assign truth values to the gluing surfaces between blocks, according to this bicoloring. Let $F$ be such a gluing surface. Then $F$ is the input surface for some block. If the compression body in that block containing $F$ is white, then we will say that $F$ is {\it true}. Otherwise, we say it is {\it false}. Equivalently, we can say that $F$ is {\it true} if it is the output of a block, and the compression body in that block that contains $F$ is black. Thus, if the Heegaard surface in some block separates an input surface $A$ of that block from an output surface, $B$, then $A$ and $B$ will have the same truth value. It follows immediately that the input and output surfaces of all \brep~ blocks have the same truth value. Similarly, the truth value of  the input of a \bnot~ block labelled $a$ will have the opposite truth value as the output labelled $\lnot a$. Finally, note that the surface at the input of the \bend~ block (which we have labelled with the statement $Q$) is by choice assigned the truth value {\it true.}

In the next several claims, we show that our assignment of truth values respects the logical structure of the subsentences of $Q$ that appear at the labels of (most of) the gluing surfaces.

\begin{clm}
All surfaces at the inputs and outputs of the \band~ blocks are {\it true}.
\end{clm}

\begin{proof}
The minimal genus Heegaard surface of an \band~ block separates the output surface from both inputs. Thus, the output  and input surfaces all have the same truth value. The proof is complete by noting that since $Q$ is in conjunctive normal form, the output of every \band~ block is glued to the input of the \bend~ block (a {\it true} surface), or the input of another \band~ block.
\end{proof}

We say an {\it \bor-tree} is a component of the union of the \bor~ blocks in $M_Q$.

\begin{clm}
The output of every \bor-tree is {\it true}, and at least one of the input surfaces of every \bor-tree is {\it true}.
\end{clm}

\begin{proof}
Let $F_0$ denote the output surface of an \bor-tree. Since $Q$ is in conjunctive normal form, $F_0$ is glued to the input of an \band~ block. By the previous claim, $F_0$ must be {\it true.} By construction, the Heegaard surface of the \bor~ block that contains $F_0$ separates it from at least one of the input surfaces $F_1$ of that block. Thus, $F_1$ will also be {\it true}. Working up the tree, we now consider the \bor~ block in the tree whose output is the surface $F_1$. By identical reasoning, one of its input surfaces $F_2$ must be {\it true} as well. Continuing in this way we eventually reach an input surface $F_i$ of the entire \bor-tree and conclude that it must be {\it true}.
\end{proof}

Note that some of the truth values of the sentences that label gluing surfaces interior to an \bor-tree may not be correct, but the previous claim shows this does not disturb the logical structure of the \bor-tree, taken as a whole.

To complete the lemma, note that we have assigned a truth value to the output surface of every \bvar~ block. These surfaces correspond to the variables used in the sentence $Q$. We have shown above that our assignment of truth values to the input and output surfaces of  \brep, \bnot, and \band~ blocks, as well as \bor~ trees, respects the logical structure of the sentences that label them. Thus, we have produced an assignment of truth values for the variables that make the statement $Q$ {\it true}.
\end{proof}

\begin{lem}
\label{l-sat-min}
If there is a satisfying assignment of $Q$, then the Heegaard genus of $M_Q$ is equal to $\LQ+2$.
\end{lem}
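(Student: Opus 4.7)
The plan is to invert Lemma~\ref{l-min-sat}: given a satisfying assignment $\sigma$ of $Q$, construct a generalized Heegaard splitting (Definition~\ref{dfn:GHS}) of $M_Q$ whose amalgamation has genus exactly $\LQ + 2$. Together with the lower bound of Lemma~\ref{l:GenusMin}, this forces $g(M_Q) = \LQ+2$.

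First I would assign a truth value to every gluing surface of $M_Q$ by evaluating, under $\sigma$, the subsentence that labels it. Then, for each block, I would choose a minimal genus (genus two) Heegaard splitting and color its two compression bodies black and white using the rule of Lemma~\ref{l-min-sat}: at each boundary component $F$, the compression body containing $F$ is white if either ($F$ is an input of that block and is labeled \emph{true}) or ($F$ is an output and is labeled \emph{false}), and black otherwise. Because each gluing surface is an input to one of the two adjacent blocks and an output to the other, this rule automatically produces opposite colors on the two sides of every gluing, so condition~(1) of Definition~\ref{dfn:GHS} is built in once the per-block choices are consistent with the demanded coloring.

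The heart of the argument is a block-by-block verification that the demanded colors are realized by some minimal genus bipartition. The \bvar~ and \bend~ blocks each have a single torus boundary, so the demanded color merely picks out which compression body is black. For \brep, \bnot, and \band~ the minimal bipartition is unique (Lemmas~\ref{l-twisted-link} and~\ref{l-twisted-link-filled}), and one checks directly that the truth values forced by $\sigma$ produce a monochromatic coloring on each side of the bipartition: the propagation of a single variable value through \brep, the opposite values of $a$ and $\lnot a$ through \bnot, and the CNF hypothesis---which guarantees that every \band~ block lies on the path from a true clause to \bend, so under $\sigma$ all three of its labeled surfaces are true---match the fixed bipartitions appropriately. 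The \bor~ block is the only one that uses its flexibility: by Lemma~\ref{l-pair-of-pants} all three nontrivial bipartitions are realized by minimal splittings, and a short case analysis shows that for any triple of truth values satisfying $v_{T_0} = v_{T_1} \vee v_{T_2}$---which holds at every \bor~ block once internal surfaces of an \bor-tree are evaluated under $\sigma$---exactly one of the three bipartitions matches the demanded coloring.

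Finally, the dual graph $\mathcal G$ of the block decomposition is a tree, because $M_Q$ is assembled along the parse-and-replication structure of $Q$. Any orientation of a tree is acyclic, so condition~(2) of Definition~\ref{dfn:GHS} is automatic. Applying Theorem~\ref{thm:GHS} amalgamates the per-block splittings into a Heegaard splitting $V\cup_H W$ of $M_Q$, and Theorem~\ref{t:GenusCount} gives $g(H) = \sum_x g(H_x) - \sum_e g(F_e) + 1 - \chi(\mathcal G) = 2v - (v-1) + 1 - 1 = v+1 = \LQ+2$, using $v=\LQ+1$, $e=v-1$, each $g(H_x)=2$, each $g(F_e)=1$, and $\chi(\mathcal G)=1$. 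The main obstacle is the case analysis on block types; once that is in place, the DAG condition and the genus count follow essentially for free.
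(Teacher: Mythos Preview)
Your overall strategy matches the paper's: use the satisfying assignment to color compression bodies, verify block-by-block that a genus-two splitting realizes the demanded bipartition, amalgamate, and compute the genus. The block-by-block analysis is essentially the same as the paper's Claim~\ref{c:2color}.

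However, your claim that the dual graph $\mathcal G$ is a tree is false, and this is a genuine gap. Whenever a variable occurs more than once in $Q$---which is precisely the case that makes {\sc CNF-SAT} nontrivial---the replication structure creates cycles in $\mathcal G$. Concretely, if variable $a$ has $k\ge 2$ occurrences, the \brep~chain for $a$ produces $k$ output leaves, each of which is glued (possibly through a \bnot) to a distinct leaf of the parse tree; the first such gluing connects the two trees, and every subsequent one closes a cycle. In the paper's running example one finds $v=13$ vertices and $e=15$ edges, so $\chi(\mathcal G)=-2$. Thus the DAG condition in Definition~\ref{dfn:GHS} is \emph{not} automatic, and your proof does not establish it. The paper supplies a separate argument: any directed cycle would have a local maximum with respect to the input/output (``vertical'') ordering, forcing two output edges at a single vertex; the only block with two outputs is \brep, and both outputs lie in the same compression body there, so they cannot carry opposite orientations. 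You need this (or an equivalent) argument.

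Your genus computation also relies on $e=v-1$ and $\chi(\mathcal G)=1$, which are wrong, but you get the right answer by accident: in the formula of Theorem~\ref{t:GenusCount} with $g(H_x)=2$ and $g(F_e)=1$ one has $2v - e + 1 - (v-e)=v+1$ regardless of $e$, which is how the paper computes it in Lemma~\ref{l:GenusMin}.
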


\begin{proof}
If there is a satisfying assignment of $Q$, then that assignment gives a truth value to each expression at the gluing surfaces. In this way, each boundary component of each building block gets assigned a truth value. We color the sides of each such surface black/white so that if $F$ is a {\it true} surface at the output of a block, then the side of $F$ facing into that block is black. Similarly, if $F$ is a {\it true} surface at the input of a block, then the side facing in is colored white. Conversely, the side of a {\it false} surface at the output of a block is colored white, and the side of a {\it false} surface at an input is black.

\begin{clm}
\label{c:2color}
There is a minimal genus splitting of each block that separates all white surfaces on the inside of the block from all black surfaces facing in.
\end{clm}

\begin{proof}
Consider first the \bend~ block. Since there is only one boundary component, any Heegaard splitting (and in particular the minimal genus one) has the desired separation property.

Next we consider the \band~ blocks. Since $Q$ is in conjunctive normal form, the output of each such block is either attached to the \bend~ block, or another \band~ block. Hence, if there is a satisfying assignment for $Q$ then the labels at every input and output surface of an \band~ block are {\it true} logical sentences. It follows that the side of the input surfaces that face into such a block are white, and the side of the output surface facing into the block is black. Such a block has the output as a preferred boundary component, meaning that a minimal genus splitting separates the output surface from both input surfaces. Hence, the minimal genus splitting has the desired separation property.

An \bor~ block has no preferred boundary component. Thus, there is a minimal genus splitting for each non-trivial bipartitioning of the boundary components. It follows that the only way the separation property can fail is if the side of every boundary surface facing in to the block is the same color. If they are all white, then this corresponds to both inputs being {\it true}, and the output being {\it false}. If they are all black, then both inputs are {\it false}, and the output is {\it true}. Neither situation obeys the properties of the logical ``or" operation, so we will not see these sets of truth values for the labels of the surfaces at the boundary of an \bor~ block.

By construction, a \brep~ block has the same logical value at each input and output. If they are all {\it true}, then the side of the input surface that faces into the block is white, and the side of the outputs that faces in is black. The input surface of this block is a preferred boundary component, so the minimal genus splitting separates black from white as desired. If all surfaces are {\it false}, the situation is reversed.

Finally, we consider the \bnot~ blocks. The sentences at the boundary components of a \bnot~ block will have opposite truth values. Thus, the side of the input surface facing into the block will have the same color as the side of the output surface facing in. Both surfaces are on the same side of a minimal genus splitting of a \bnot~ block.
\end{proof}

Assume we have now chosen splittings of each block in accordance with the conclusion of Claim \ref{c:2color}. Then the building blocks are separated into compression bodies by these splittings, and these compression bodies inherit the color black or white, according to the colors of their negative boundaries. Furthermore, because opposite sides of any single gluing surface are different colors, it follows that neighboring compression-bodies in $M_Q$ are colored differently.

According to Theorem \ref{thm:GHS}, to show that we can amalgamate our choice of splittings of the building blocks, it remains to show that the directed graph $\mathcal G$ that is dual to the gluing surfaces has no directed cycles. (Recall that each edge of this graph is oriented so that it passes from a black compression body into a white one.)

We have constructed $M_Q$ vertically so that the output surface(s) of any given block is below its input surface(s). Any directed cycle must have a local maximum, $x$. Let $e_1$ and $e_2$ be the edges of the cycle that meet $x$, where $e_1$ is oriented toward $x$, and $e_2$ is oriented away. As $x$ is a local maximum, both $e_1$ and $e_2$ correspond to output surfaces of the building block corresponding to $x$. It follows that this building block is a \brep~ block, as this is the only type of block that has two output surfaces. However, according to our coloring scheme, both output surfaces of a \brep~ block are on the boundary of the same compression body. If this compression body is black, then both $e_1$ and $e_2$ are  oriented away from $x$. If the compression body is white, then both are oriented toward $x$. This contradiction establishes that there are no directed cycles in $\mathcal G$.

By Theorem \ref{thm:GHS} we can now amalgamate the chosen splittings of our building blocks, creating a splitting of $M_Q$. By the computation given in the proof of Lemma \ref{l:GenusMin}, the genus of this splitting is $\LQ+2$.
\end{proof}

Finally, note that if one were to remove the \bvar~ blocks from $M_Q$, we would obtain a manifold with a boundary component corresponding to each variable, and, for each satisfying assignment,  a minimal genus Heegaard splitting that induces a \{{\it true} $|$ {\it false}\} bipartition of the corresponding boundary components.  That is the basis for the following corollary.

\begin{cor}
\label{cor-partitions}
Let $\mathcal P$ be a non-empty set of bipartitions of  $1,2,...,n$.  Then there is a 3-manifold $X$ and a numbering of its boundary components, $1,2,...,n$,  so that the set of bipartitions of $\partial X$ induced by minimal genus Heegaard splittings of $X$ is precisely $\mathcal P$.
\end{cor}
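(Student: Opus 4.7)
The plan is to adapt the construction of Section~\ref{s:constructing} by encoding $\mathcal P$ as a CNF formula $Q$ and realizing $X$ as $M_Q$ with its \bvar\ blocks stripped off. Introduce Boolean variables $a_1,\ldots,a_n$ and identify each truth assignment $\alpha$ with the unordered bipartition $\{\alpha^{-1}(T)\mid\alpha^{-1}(F)\}$ of $\{1,\ldots,n\}$. Let $S\subseteq\{T,F\}^n$ be the set of all assignments whose induced bipartition lies in $\mathcal P$; note that $S$ is closed under global complementation. For each $\alpha\notin S$, conjoin the single clause $\bigvee_i \ell_i$ with $\ell_i=\lnot a_i$ if $\alpha(a_i)=T$ and $\ell_i=a_i$ otherwise; this clause is falsified only by $\alpha$, so the resulting CNF $Q$ has satisfying set exactly $S$, and the bipartitions its satisfying assignments induce are precisely $\mathcal P$.

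Next, build $M_Q$ as in Section~\ref{s:constructing} and let $X$ be the result of removing the $n$ \bvar\ blocks. Since each \bvar\ block meets the rest of $M_Q$ along a single torus, $X$ is connected with exactly $n$ torus boundary components, one corresponding to each variable $a_i$; label these $1,\ldots,n$ accordingly. The interior gluings of $X$ are inherited unchanged from $M_Q$, so the appendix's result still forces every sufficiently low-genus Heegaard splitting of $X$ to be an amalgamation along the remaining gluing tori. Applying Theorem~\ref{t:GenusCount} to the dual graph $\mathcal G'$ of $X$ --- which has $v'=\LQ+1-n$ vertices, $e'=\LQ-n$ edges, and Euler characteristic $1$ --- gives $g(X)\geq\LQ+2-n$, with equality precisely when every remaining block uses a genus-$2$ splitting and the resulting bicoloring defines a generalized Heegaard splitting of $X$.

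I would then mimic the proofs of Lemmas~\ref{l-min-sat} and~\ref{l-sat-min} almost verbatim: any generalized Heegaard splitting of $X$ produces a satisfying assignment of $Q$ by reading off the color of each boundary torus (which now plays the role previously played by the output of a \bvar\ block), and conversely any satisfying assignment produces a bicoloring obeying the DAG condition, which amalgamates to a minimum-genus splitting of $X$. Under this correspondence, the bipartition of $\partial X$ induced by a minimum-genus splitting is exactly the partition of variables into \emph{true} and \emph{false}, so the set of bipartitions realized by minimum-genus Heegaard splittings of $X$ equals the set induced by the satisfying assignments of $Q$, namely $\mathcal P$. The main obstacle is the ``every minimum-genus splitting is an amalgamation'' direction, handled by the appendix as for $M_Q$ since the interior gluings are unchanged; beyond this, the only extra care is bookkeeping --- adjusting the vertex and edge counts by $n$, and confirming that the unordered nature of bipartitions matches the complementation symmetry of $S$.
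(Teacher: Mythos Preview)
Your proposal is correct and follows essentially the same route as the paper: encode $\mathcal P$ by a CNF $Q$ whose satisfying assignments correspond to the desired bipartitions, build $M_Q$, delete the \bvar\ blocks (which are leaves of $\mathcal G$), and invoke the arguments of Lemmas~\ref{l-min-sat} and~\ref{l-sat-min} together with the amalgamation result from the appendix. Your direct ``one excluding clause per forbidden assignment'' construction is exactly what the paper obtains by applying De~Morgan to $\neg\bigvee_{P\in\mathcal P^C} q(P)$, and your explicit handling of the complementation symmetry and the adjusted vertex/edge count is a bit more careful than the paper's sketch.
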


\renewcommand{\P}{\mathcal P}

\begin{proof}
Suppose that $P$ is a bipartition of $1,..,n$. That is, $P = \{ P_+ | P_-\}$ so that $P_+ \cup P_- = 1,..,n$ and $P_+ \cap P_- = \emptyset$.  Let $v_i,i=1,..,n$ be variables and let the clause $q(P)$ be a conjunction of each variable or its negation, depending on which side of the bipartition $P$ its index belongs to:

$$q(P) = \bigwedge \{v_i|i \in P_+\} \bigwedge \{\neg v_i|i \in P_-\}.$$

Of course, $q(P)$ accepts exactly one satisfying assignment, and that corresponds (via the correspondence $i \in P_+ \iff v_i = \textit{true}$) to the bipartition $P$.   Now let $\P$ be a set of bipartitions of $1,...,n$ and let $\P^C$ be its complement, i.e.~the set of bipartitions not in $\mathcal P$.    Let

$$Q(\P^C) = \bigvee \{ q(P) | P \in \P^C \}$$

Now, let $Q = Q(\P) = \neg Q(\P^C)$ which, after applying De Morgan's laws, is an instance of {\sc CNF-SAT}.  Let $M_Q$ be built according to the procedure above.   Now it is easy to check that satisfying assignments are in 1-1 correspondence with bipartitions $P \in \P$, again by using the correspondence $i \in P_+ \iff v_i=\textit{true}$.

Let $M_Q$ be constructed as before.  Note that since $Q$ is satisfiable, $M_Q$ has Heegaard genus $\LQ + 2$.  Let $M_Q'$ be the manifold obtained by removing each \bvar~ block.  Because each \bvar~ block removed is a leaf in $\mathcal G$, the graph dual to the block structure, the proofs of Lemmas \ref{l-min-sat} and \ref{l-sat-min} apply to $M_Q'$ as well as to $M_Q$.   In particular,  a minimal genus splitting of $M_Q'$ determines a satisfying truth assignment to the $v_i$'s, and vice-versa.   Note that each $v_i$ labels a boundary component of $M_Q'$, and each minimal genus splitting  separates the {\it true} variables from the {\it false} variables, so bipartitions induced by minimal genus splittings are in 1-1 correspondence with satisfying assignments which in turn are in 1-1 correspondence with bipartitions $P \in \P$ (via $i \in P_+ \iff v_i= \textit{ true}$).

\end{proof}

\section{Triangulating $M_Q$}
\label{s:triangulating}

\newcommand{\F}{\mathcal F}
\newcommand{\D}{d_\F}
\newcommand{\R}[2]{\tfrac{F_{#1}}{F_{#2}}}

In this section, we describe how to triangulate the manifold $M_Q$ so that the number of tetrahedra used is at most quadratic in $|Q|$, the length of the statement $Q$.   Our goal is the following:

\begin{pro}
\label{t-triangulation-quadratic}
A triangulated $M_Q$ can be produced in quadratic time (and tetrahedra) in \LQ.
\end{pro}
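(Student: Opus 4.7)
The plan is to assemble $M_Q$ from a fixed library of block triangulations and to realise each torus--torus gluing using a layered triangulation whose size is linear in \LQ. First I record, once and for all, an explicit triangulation of each of the six model blocks: the trefoil exterior (for \bvar{} and \bend), the twisted torus link exterior of Figure~\ref{f:t7176} (for \brep{} and \band), a specific high-distance Dehn filling on that link exterior (for \bnot), and the pair of pants times $S^1$ (for \bor). Each uses a constant number of tetrahedra, independent of $Q$, and each torus boundary is equipped with a fixed one-vertex triangulation (two triangles, three edges), so that all boundary tori are combinatorially isomorphic and ready to be identified.

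For each edge of the dual graph $\mathcal G$, I then insert a layered triangulation realising the required high-distance gluing. Starting from the standard one-vertex triangulation, stacking a single tetrahedron across two adjacent triangles performs one diagonal flip on the boundary, which corresponds to multiplication by one of the standard generators of the mapping class group $\mathrm{SL}_2(\mathbb{Z})$ of the torus. A word of length $N$ in these generators is thus realised by stacking $N$ tetrahedra, producing on top a one-vertex triangulation related to the starting one by the corresponding mapping class. By Proposition~\ref{t:amalgamation}, it suffices that each gluing map have Farey-graph distance bounded below by some function that is linear in the target Heegaard genus $\LQ+2$. Fixing any hyperbolic element $A\in\mathrm{SL}_2(\mathbb{Z})$, the power $A^n$ has translation length linear in $n$ in the Farey graph and word length $O(n)$ in the standard generators; setting $n=C(\LQ+2)$ for a universal constant $C$ therefore yields the needed high-distance gluing using $O(\LQ)$ tetrahedra.

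Counting, there are $\LQ+1$ blocks contributing $O(\LQ)$ tetrahedra in total (constant each), and $O(\LQ)$ gluing tori each contributing $O(\LQ)$ layered tetrahedra, for a grand total of $O(\LQ^2)$. Algorithmically, parsing the fully parenthesised $Q$ in linear time determines both the block structure and the dual graph $\mathcal G$; emitting the block triangulations and then the layered gluings takes time proportional to the output, which is $O(\LQ^2)$. The main obstacle I anticipate is the quantitative interface with Proposition~\ref{t:amalgamation}: one must verify that its distance threshold grows at most \emph{linearly} in the Heegaard genus one wishes to exclude from non-amalgamation, and that the translation distance of $A^n$ in the Farey graph grows at least linearly in $n$, with a constant depending only on the choice of $A$ (not on $Q$). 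Both facts are standard, but they must be tracked carefully to avoid a hidden super-linear dependence that would break the quadratic bound.
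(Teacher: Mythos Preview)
Your proposal is correct and follows essentially the same approach as the paper: fixed-size triangulated blocks with one-vertex torus boundaries, layered triangulations of size $O(\LQ)$ at each of the $O(\LQ)$ gluing tori to realise maps whose Farey distance exceeds the linear threshold from Proposition~\ref{t:amalgamation}, giving $O(\LQ^2)$ in total. Your phrasing via powers of a hyperbolic element of $\mathrm{SL}_2(\mathbb{Z})$ is exactly the mechanism the paper implements concretely through Fibonacci layering (Lemmas~\ref{l-layer-fib}--\ref{l-triang-gluing}); the paper additionally spells out how to obtain the one-vertex boundary triangulations (via SnapPea and $0$-efficient triangulations) and how to locate, via normal surface theory, the reference surfaces $S_\pm$ against which the gluing distance in Proposition~\ref{t:amalgamation} is measured, but these are constant-time preprocessing steps that do not affect your complexity count.
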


We proceed in several steps.  First, in Sections \ref{s:Farey} and \ref{s:layering} we give a method to perform high distance triangulated gluings via {\it layered triangulations}.  For the most part, these are not new results.  Our statements about distances in the Farey graph in Section \ref{s:Farey} are certainly well known, and layered triangulations (Section \ref{s:layering}) are described by Jaco and Rubinstein in \cite{jaco-rubinstein-layered}.   We include these sections, instead of just citing earlier work, because they are both accessible to the non-expert and also make explicit the relationship between the distance of the gluing and the number of layers.

Next, in Section \ref{s:blocks}, we give a topological description of block manifolds whose boundary components are appropriately bipartitioned by minimal genus Heegaard splittings.  We consolidate some well known results and  substantially leverage the work of Morimoto, Sakuma, and Yokota on Heegaard splittings of twisted torus knots \cite{msy}, and the work of Moriah, Rieck, Rubinstein and Sedgwick that characterizes how and when a Dehn filling creates new Heegaard splittings \cite{moriah-rubinstein, rieck-heegaard-dehn,rieck-sedgwick-1,rieck-sedgwick-2,moriah-sedgwick-heegaard-dehn}.

We conclude, in Section \ref{s:proof_prop}, with a proof of Proposition \ref{t-triangulation-quadratic} that describes how the blocks can be triangulated and then glued together.


\subsection{Slopes and the Farey Graph}
\label{s:Farey}

A {\em slope} is the isotopy class of an essential simple closed curve on a torus.  Fix a pair of basis elements for the homology, $\mathbb Z \times \mathbb Z$, of the torus.  Then any slope can be written as a pair $(a,b)$, and because it is realized by a simple (connected) curve, we have $\gcd(a,b)=1$.  The usual convention is thus to represent the slope by the extended rational $\frac{a}{b} \in \mathbb Q \cup \{\infty\}$, where $\infty = \frac{1}{0}$.

\begin{figure}
\includegraphics[width=4in]{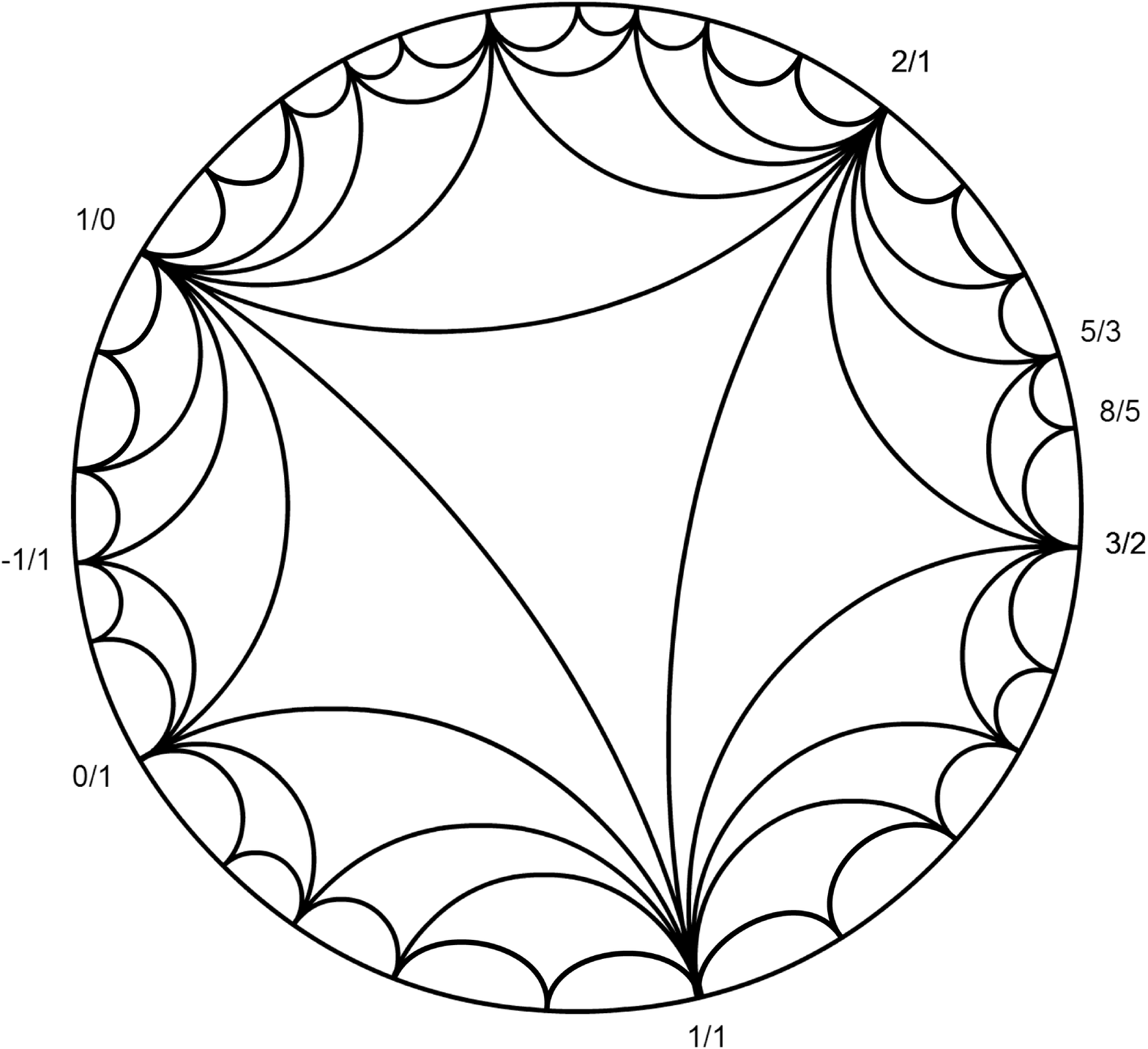}
\caption{The Farey Tessellation of the Poincar\'e Disk.}
\label{f-farey-graph}
\end{figure}

We say that a pair of slopes have {\em distance one} if there are a pair of curves representing the slopes that intersect transversely in a single point.  It is well known that a pair of slopes have distance one if and only if  their extended rationals (with respect to any basis), $\frac{a}{b}$ and $\frac{c}{d}$ , satisfy $|ad-bc|=1$.

\begin{dfn}
Let $T$ be a torus.  The {\em Farey graph for $T$} is the graph whose vertex set is the set of slopes and whose edges join any pair of vertices whose underlying slopes have distance one.   Of course, after choosing a basis for homology, we are able to label each vertex of the graph with an extended rational $\frac{a}{b} \in \mathbb Q \cup \{\infty\}$.  Each edge then joins a pair of extended rationals, $\frac{a}{b}$ and $\frac{c}{d}$,  which satisfies $|ad-bc|=1$.
\end{dfn}

\begin{dfn}
If $\alpha$ and $\beta$ are slopes in a torus $T$, then the {\em Farey distance} between them $d_\F(\alpha,\beta)$ is their distance in the Farey graph.  If $a \subset T$ and $b \subset T$ are closed essential curves, then we define their distance, $d_\F(a,b) = d_\F(\alpha,\beta)$, to be the distance between $\alpha$ and $\beta$, isotopy classes of single components of $a$ and $b$, respectively.
\end{dfn}

Form a $2$-complex, the {\em curve complex of the torus $T$}, by attaching to the Farey graph a triangular face for every triple of slopes that pairwise intersect once.   Fixing a basis for $T$, every edge is specified by a pair $\left(\frac a b, \frac c d \right)$ satisfying $|ad-bc|=1$.  It is not hard to see that in the curve complex, there are precisely two triangles, $\left(\frac a b, \frac c d, \frac {a+c}{b+d} \right)$ and $\left(\frac a b, \frac c d, \frac{a-c}{b-d} \right)$ attached to the edge $\left(\frac a b, \frac c d \right)$.  This is described by the well known {\em Farey tessellation} of the Poincar\'e disk model of $\mathbb H^2$, see Figure \ref{f-farey-graph}.

Moreover, each triangular face identifies a triangulation of the torus $T$ up to isotopy:   The slopes $\frac a b$ and $\frac c d$ can be realized by a pair of curves in the torus meeting in a single point.   Together, they cut the torus into a rectangle.   This rectangle has exactly two choices for a diagonal curve, with slopes $\frac {a+c}{b+d}$ and  $\frac {a-c}{b-d}$ when connected through the intersection point.  Choose one, say $\frac {a+c}{b+d}$.  Then the triple of curves $\left(\frac a b, \frac c d, \frac {a+c}{b+d}\right)$ intersect in a single common point.  Treating that point as a vertex, we have formed a (non-simplicial) triangulation of the torus $T$ with one vertex, three edges and two faces.   We call this a {\em one-vertex triangulation of the torus}.  Note that the two triangulations $\left(\frac a b, \frac c d, \frac {a+c}{b+d}\right)$ and $\left(\frac a b, \frac c d, \frac{a-c}{b-d}\right)$ meeting the edge $\left(\frac a b, \frac c d\right)$ are related by a {\em diagonal flip}, that exchanges the diagonal $\frac {a+c}{b+d}$ for the diagonal $\frac {a-c}{b-d}$, or vice-versa.

\subsection{Layering}
\label{s:layering}

Later we will assume that our manifold $X$ has been endowed with a triangulation that restricts to a one vertex triangulation of each of its torus boundary components \cite{jaco-rubinstein-0efficient}.

\begin{figure}[h]
\psfrag{o}{$e$}
\psfrag{n}{$e_\Delta'$}
\psfrag{d}{$e_\Delta$}
\psfrag{p}{$+$}
\psfrag{e}{$=$}
\includegraphics[width=3.5in]{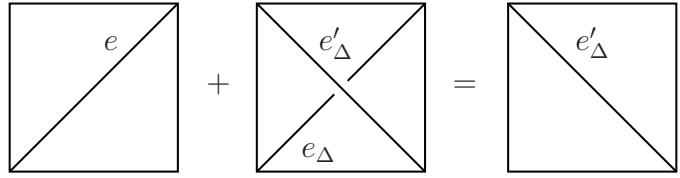}
\caption{Layering a tetrahedron on the boundary swaps a diagonal.}
\label{f:layer}
\end{figure}

Let $e$ be an edge in the triangulation of the boundary torus $T \subset \partial X$.   Then $e$ can be regarded as the diagonal of a rectangle $R$ bounded by the other two edges.   Picture a new tetrahedron, $\Delta$, as being a slightly thickened horizontal rectangle.  Its bottom is a rectangle $R_\Delta$ with diagonal $e_\Delta$ and its top is a rectangle $R_\Delta'$ with diagonal $e_\Delta'$.  See Figure~\ref{f:layer}.  One can form a new triangulated manifold $X' = X \cup_{R=R_\Delta} \Delta$, by gluing $R$ to $R_\Delta$ so that the diagonals $e$ and $e_\Delta$ are identified.  This process is called {\em layering at $e$}   (see also \cite{jaco-sedgwick}).  It is not hard to see that the manifold $X'$ is homeomorphic to $X$ (as it retracts onto $X$) but that the boundary triangulation has changed.  In particular, while $e$ is no longer in the boundary torus, the boundary of $R$ is still in the boundary torus, but its diagonal is now opposite and realized by $e_\Delta'$.  Thus, layering at $e$ performs a diagonal flip on $e$ in the boundary triangulation.  The two triangulations are represented in the Farey tessellation by a pair of triangles that share a common edge.

\begin{lem}
\label{l-layer-fib}
Let $T \subset \partial X$ have a one-vertex triangulation  with edge slopes $\left(\frac 0 1, \frac 1 0, \frac 1 1\right)$.  Then, by layering on $k$ tetrahedra, we can obtain a new triangulation of $X$ with edge slopes $\left( \R{k-1}{k-2}, \R{k}{k-1}, \R{k+1}{k}\right)$, where $F_k$ is the $k^{th}$ Fibonacci number.
\end{lem}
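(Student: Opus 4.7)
The plan is an induction on $k$, relying on the identification between layering at an edge and performing a diagonal flip in the Farey tessellation set up in Section~\ref{s:layering}, together with the Fibonacci recurrence $F_{n+1}=F_n+F_{n-1}$.

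For the base case I would check that the starting triangulation $(0/1,\,1/0,\,1/1)$ already has the form claimed by the formula: with the standard convention $F_{-1}=1$, $F_0=0$, $F_1=F_2=1$, the triple $(F_0/F_{-1},\,F_1/F_0,\,F_2/F_1)$ equals $(0/1,\,1/0,\,1/1)$, so the lemma holds at $k=1$ with no layerings performed.

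For the inductive step, I would assume that after some sequence of layerings the triangulation has boundary slopes $(F_{k-1}/F_{k-2},\,F_k/F_{k-1},\,F_{k+1}/F_k)$ and then layer one more tetrahedron on the edge of slope $F_{k-1}/F_{k-2}$. By the description in Section~\ref{s:layering}, this edge is regarded as a diagonal of the rectangle whose sides have slopes $F_k/F_{k-1}$ and $F_{k+1}/F_k$, and the layering replaces it with the \emph{other} diagonal of that rectangle. The two possible diagonals have slopes $(F_k+F_{k+1})/(F_{k-1}+F_k)$ and $(F_k-F_{k+1})/(F_{k-1}-F_k)$. Using $F_{k+1}=F_k+F_{k-1}$ and $F_{k+2}=F_{k+1}+F_k$, the first collapses to $F_{k+2}/F_{k+1}$, and the second (after clearing signs) collapses to $F_{k-1}/F_{k-2}$ --- the edge just flipped away. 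Hence the layering introduces a new edge of slope $F_{k+2}/F_{k+1}$, producing the triangulation $(F_k/F_{k-1},\,F_{k+1}/F_k,\,F_{k+2}/F_{k+1})$, which is the claimed formula at index $k+1$.

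I do not expect a substantive obstacle: the argument is essentially a calculation inside the Farey graph. The only care required is bookkeeping the indexing --- verifying at each step that it is the ``sum'' diagonal (not the ``difference'' diagonal) of the rectangle that is introduced by the flip, and that the three slopes really do form a valid one-vertex torus triangulation, which reduces to the well-known Fibonacci identity $F_{k+1}F_{k-1}-F_k^2=\pm 1$.
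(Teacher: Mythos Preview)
Your argument is correct and is essentially the paper's own proof, just phrased as an explicit induction: the paper writes the sequence $\tfrac01,\tfrac10,\tfrac11,\tfrac21,\tfrac32,\ldots$, observes that each consecutive triple differs from the next by a single diagonal flip, and counts steps, which is exactly your inductive step unwound. Your only slip is the base-case bookkeeping: you verify the formula at $k=1$ ``with no layerings performed,'' whereas the lemma as stated says $k$ layerings yield the $k$th triple, so the base case should be $k=0$ (or, equivalently, one accepts a harmless off-by-one in the indexing that is already latent in the paper's statement).
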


\begin{proof}
Consider the sequence $\frac 0 1, \frac 1 0, \frac 1 1, \frac 2 1, \frac 3 2, \frac 5 3,  \ldots,  \R{k-1}{k-2}, \R{k}{k-1}, \R{k+1}{k}$.   Note that each successive triple of terms determines a triangulation, and that each successive pair of triples share two slopes. Hence, the latter boundary triangulation can be obtained by layering on the edge of the former that they do not share.  It takes $k$ steps, hence $k$ layers, to move from the first triple to the last.
\end{proof}

Furthermore, continued layering in this fashion increases the distance between the latest edge slopes and the original edge slopes:

\begin{lem}
\label{l-distance-fib}
Let $F_k$ be the $k^{th}$ Fibonacci number.  Then,
$$\D\left(\tfrac{F_{k+1}}{F_k},\infty\right) = \lfloor k/2 \rfloor + 1    $$
\end{lem}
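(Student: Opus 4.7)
The plan is to prove matching upper and lower bounds. For the upper bound I start from Cassini's identity $F_{n+1}F_{n-1}-F_n^2=(-1)^n$, from which a single use of the Fibonacci recursion gives $|F_{n+1}F_{n-2}-F_nF_{n-1}|=1$. This says $F_{n+1}/F_n$ and $F_{n-1}/F_{n-2}$ are Farey-adjacent, so one can hop two Fibonacci indices at a time. Starting at $\infty$, the path $\infty \to F_2/F_1 \to F_4/F_3 \to \cdots$, extended by one additional Cassini step to adjust parity when needed, reaches $F_{k+1}/F_k$ in exactly $\lfloor k/2\rfloor + 1$ steps, yielding $\D(F_{k+1}/F_k,\infty) \le \lfloor k/2\rfloor + 1$.

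For the lower bound I proceed by strong induction on $k$. The base cases $k=0,1$ follow immediately from $|F_{k+1}\cdot 0 - F_k \cdot 1| = 1$. For the inductive step, fix a Farey geodesic $\infty = \alpha_0, \alpha_1, \ldots, \alpha_m = F_{k+1}/F_k$ and observe that every Farey neighbor of $F_{k+1}/F_k$ has the form $(F_k + c F_{k+1})/(F_{k-1} + c F_k)$ for some $c \in \mathbb Z$; hence $\alpha_{m-1}$ is one of these. When $c \in \{-1, 0\}$ the slope $\alpha_{m-1}$ is a smaller Fibonacci ratio ($F_{k-1}/F_{k-2}$ or $F_k/F_{k-1}$), and the inductive hypothesis forces $m \ge \lfloor k/2\rfloor + 1$ directly.

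The main obstacle is the remaining values of $c$, for which $\alpha_{m-1}$ has denominator exceeding $F_{k+1}$ and is not itself a Fibonacci ratio, so the inductive hypothesis does not apply verbatim. I expect the cleanest resolution is to invoke the classical identification of $\D(p/q, \infty)$ with the length of the Hirzebruch--Jung (negative) continued fraction expansion $p/q = a_1 - 1/(a_2 - 1/(\cdots - 1/a_n))$ with each $a_i \ge 2$. Given that identification, a direct induction using the identity $F_{k+1}/F_k = 2 - F_{k-2}/F_k$ shows that the negative continued fraction of $F_{k+1}/F_k$ has exactly $\lfloor k/2 \rfloor + 1$ entries, matching the claim. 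A self-contained alternative is to continue the penultimate-vertex case analysis: for each excluded $c$ one explicitly constructs from $\alpha_{m-1}$ a Farey path to one of the Stern--Brocot parents $F_k/F_{k-1}$ or $F_{k-1}/F_{k-2}$ of $F_{k+1}/F_k$, reducing that case to one of the two already handled.
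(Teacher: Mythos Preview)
Your upper bound is correct and is essentially the paper's: both use the adjacency $\bigl|F_{k+1}F_{k-2}-F_kF_{k-1}\bigr|=1$ to step from $F_{k+1}/F_k$ to $F_{k-1}/F_{k-2}$ and descend two indices at a time.

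The lower bound, however, has a genuine gap, and your proposed patch via Hirzebruch--Jung continued fractions is wrong as stated. The ``classical identification'' of $\D(p/q,\infty)$ with the length of the negative continued fraction expansion is \emph{false} in general: for instance $7/3=[3;2,2]^{-}$ has HJ length $3$, yet $7/3$ is Farey-adjacent to $2$ and hence $\D(7/3,\infty)=2$. So even granting that the HJ expansion of $F_{k+1}/F_k$ has $\lfloor k/2\rfloor+1$ terms (which it does), this would not by itself bound the Farey distance from below. Separately, the recursion you quote, $F_{k+1}/F_k = 2 - F_{k-2}/F_k$, leaves you with $F_k/F_{k-2}$ after one HJ step rather than another $F_{j+1}/F_j$, so the induction does not close on the stated quantity. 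Your alternative (``continue the case analysis'') is too vague to evaluate: exhibiting a Farey path from $\alpha_{m-1}$ to a Stern--Brocot parent of $F_{k+1}/F_k$ gives an upper bound on $\D(\alpha_{m-1},\text{parent})$, not the lower bound on $\D(\alpha_{m-1},\infty)$ that you need.

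The idea you are missing is geometric and replaces the entire penultimate-vertex case split in one stroke. In the Farey tessellation of the disk, the edge joining $F_{k-1}/F_{k-2}$ and $F_k/F_{k-1}$ \emph{separates} $\infty$ from $F_{k+1}/F_k$ (the two triangles meeting that edge have third vertices $F_{k-2}/F_{k-3}$ and $F_{k+1}/F_k$, lying on opposite sides, and $\infty$ is on the $F_{k-2}/F_{k-3}$ side by induction). Hence every Farey path from $\infty$ to $F_{k+1}/F_k$ must pass through one of the two endpoints of that edge, giving
\[
\D\!\left(\tfrac{F_{k+1}}{F_k},\infty\right)\ \ge\ \min\!\left\{\D\!\left(\tfrac{F_{k-1}}{F_{k-2}},\infty\right),\ \D\!\left(\tfrac{F_{k}}{F_{k-1}},\infty\right)\right\}+1\ =\ \D\!\left(\tfrac{F_{k-1}}{F_{k-2}},\infty\right)+1,
\]
which is exactly the two-step recursion needed. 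This is the paper's argument, and it is both shorter and self-contained.
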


\begin{proof}
We will give an inductive proof.   It is easy to verify that the statement holds for $k=0,1,2$, where $\R{k+1}{k}=\tfrac{1}{1},\tfrac{2}{1},\tfrac{3}{2}$, respectively, and the distances to $\infty = \tfrac{1}{0}$ are $1,1,2$, respectively.  Let $k$ be the least $k$ for which the conclusion of the lemma does not hold.  In the Poincar\'e disk, consider the triangle $\left(\R{k-1}{k-2},\R{k}{k-1},\R{k+1}{k}\right)$ which is bounded by edges of the Farey Graph (see Figure \ref{f-farey-graph}). This triangle separates the disk into 3 components.

First, we claim that the points $\R{k+1}{k}$ and $\infty = \frac{1}{0}$ lie on opposite sides of the edge $\left(\R{k}{k-1},\R{k-1}{k-2}\right)$.  To see this, note that the point $\R{k-2}{k-3}$ is the other corner of the second triangle that meets the edge $\left(\R{k}{k-1},\R{k-1}{k-2}\right)$.  The inductive hypothesis implies $\D\left(\R{k-2}{k-3},\infty \right) < \D\left(\R{k}{k-1},\infty \right)$, so the second triangle must lie on the same side of the edge $\left(\R{k}{k-1},\R{k-1}{k-2} \right)$ as $\infty$, hence the point $\R{k+1}{k}$, lies on the other side.

Now, take a minimal path in the Farey Graph joining $\infty$ to $\R{k-1}{k-2}$.  By adjoining the edge $\left(\R{k-1}{k-2},\R{k+1}{k}\right)$ to that path, we obtain a path from $\infty$ to $\R{k+1}{k}$.   It follows that $\D\left(\R{k+1}{k},\infty \right) \leq \D \left(\R{k-1}{k-2},\infty\right)+1$.

Now, take a minimal path from $\infty$ to $\R{k+1}{k}$.  Because $\infty$ and $\R{k}{k-1}$ lie on opposite sides of the edge $\left(\R{k-1}{k-2},\R{k}{k-1} \right)$, this minimal path must pass through either the point $\R{k-1}{k-2}$ or the point $\R{k}{k-1}$.   It follows that
\begin{eqnarray*}
\D\left(\R{k+1}{k},\infty \right) &\geq& \min \left\{ \D\left(\R{k-1}{k-2},\infty\right)+1,\D\left(\R{k}{k-1},\infty \right)+1\right\}\\
& =& \D \left(\R{k-1}{k-2},\infty \right)+1.
\end{eqnarray*}

Thus, $\D \left(\R{k+1}{k},\infty \right) = \D\left(\R{k-1}{k-2},\infty\right)+1$ and the desired result follows.

\end{proof}

\begin{lem}
\label{l-triang-gluing}
Let $X$ be a (possibly disconnected) 3-manifold given via a  triangulation that has a single vertex in each of two torus boundary components, $T_0$ and $T_1$.  If $\alpha_0 \subset T_0$ and $\alpha_1 \subset T_1$ are slopes and $D \in \mathbb N$, then there is a triangulated manifold $X'$ obtained from $X$ by gluing $T_0$ to $T_1$ so that
\begin{itemize}
\item $\D(\alpha_0,\alpha_1) >  D$, where distance is measured in the common image of $T_0$ and $T_1$ in $X'$, and
\item $t(X') = t(X) + 2D$, where $t(\cdot)$ is  number of tetrahedra.
\end{itemize}
\end{lem}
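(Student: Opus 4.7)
The plan is to realize the gluing by inserting a layered $T^2 \times I$ between $T_0$ and $T_1$, using the Fibonacci-pattern layering of Lemma~\ref{l-layer-fib} to drive up the Farey distance, and quantifying the distance via Lemma~\ref{l-distance-fib}. All $2D$ new tetrahedra will be introduced by the layering; the combinatorial gluing itself contributes no tetrahedra.

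First I would normalize coordinates by a change of basis on $H_1(T_0)$ and $H_1(T_1)$ so that each initial one-vertex triangulation has the standard Farey-triangle edge slopes $(\tfrac{0}{1},\tfrac{1}{0},\tfrac{1}{1})$, with $\alpha_0$ and $\alpha_1$ each occupying the edge slope $\tfrac{1}{0}=\infty$ in its respective coordinates; this is possible because $PSL(2,\mathbb{Z})$ acts transitively on Farey triangles with a marked vertex. Next I would apply Lemma~\ref{l-layer-fib} to $T_0$, layering $2D$ tetrahedra so that the new boundary triangulation on $T_0$ has edge slopes $\bigl(\tfrac{F_{2D-1}}{F_{2D-2}},\,\tfrac{F_{2D}}{F_{2D-1}},\,\tfrac{F_{2D+1}}{F_{2D}}\bigr)$. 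By Lemma~\ref{l-distance-fib} the third of these edge slopes satisfies $\D\bigl(\tfrac{F_{2D+1}}{F_{2D}},\infty\bigr)=D+1$. I would then choose the gluing map $\phi:T_0\to T_1$ to be a homeomorphism that carries the layered triangulation on $T_0$ to the initial triangulation on $T_1$ and identifies the distinguished edge slope $\tfrac{F_{2D+1}}{F_{2D}}$ on $T_0$ with $\alpha_1$ on $T_1$; the existence of such a $\phi$ follows from the transitive action of the torus mapping class group on one-vertex triangulations with a marked edge, combined with the fact that both identified curves are edge slopes of their respective triangulations.

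To verify the distance bound, observe that $\phi$ is induced by a homeomorphism of tori, so it acts as an isometry of the Farey graph. Consequently, in the common image of $T_0$ and $T_1$ in $X'$ we compute $\D(\alpha_0,\alpha_1)=\D(\alpha_0,\phi^{-1}(\alpha_1))=\D\bigl(\infty,\tfrac{F_{2D+1}}{F_{2D}}\bigr)=D+1>D$, as required, while $t(X')=t(X)+2D$ is immediate since only the $2D$ layered tetrahedra were added. The main obstacle I foresee is the normalization step: a change of basis on $H_1$ costs nothing, but arranging $\alpha_0$ and $\alpha_1$ to be edge slopes of the initial triangulations may in principle require a bounded number of preliminary layerings when the input triangulations do not already present them as edges. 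In the intended application to $M_Q$, however, each block manifold's triangulation can be designed from the outset so that the gluing slopes appear as edges of the boundary triangulations, making the count exactly $t(X)+2D$ as stated.
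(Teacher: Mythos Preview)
Your overall strategy---Fibonacci layering via Lemma~\ref{l-layer-fib} and distance via Lemma~\ref{l-distance-fib}---is exactly the paper's. The gap is in your normalization step. You claim that by a change of basis one can arrange the initial triangulation on $T_i$ to have edge slopes $(\tfrac01,\tfrac10,\tfrac11)$ \emph{and} simultaneously $\alpha_i=\infty$. But whether $\alpha_i$ is an edge slope of the given one-vertex triangulation is a basis-independent fact; $PSL(2,\mathbb Z)$ acts transitively on \emph{(Farey triangle, vertex of that triangle)} pairs, not on \emph{(Farey triangle, arbitrary slope)} pairs. So for a generic $\alpha_i$ your normalization is impossible, and your proposed remedy---``a bounded number of preliminary layerings''---is also wrong: the number of diagonal flips needed to carry the given triangle to one containing $\alpha_i$ as a vertex is governed by the Farey distance from $\alpha_i$ to the triangle, which is unbounded.

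The paper's fix is exactly what you are missing. It arranges only the weaker condition $\alpha_i\le 0$ after putting the triangle at $(0,1,\infty)$; this \emph{is} always achievable, because the $S_3$ stabilizer of $(0,1,\infty)$ permutes the three complementary arcs of the boundary circle, so $\alpha_i$ can be pushed into the non-positive arc. After layering $2D$ tetrahedra on $T_0$ and gluing so that $\tfrac{F_{2D+1}}{F_{2D}}\leftrightarrow 0$, the Farey edge $\bigl(\tfrac{F_{2D-1}}{F_{2D-2}},\tfrac{F_{2D}}{F_{2D-1}}\bigr)$ separates $\alpha_0$ (on the $\infty$ side) from the image of $\alpha_1$ (on the $\tfrac{F_{2D+1}}{F_{2D}}$ side), forcing any geodesic between them through an endpoint of that edge and giving $\D(\alpha_0,\alpha_1)>\D\bigl(\infty,\tfrac{F_{2D-1}}{F_{2D-2}}\bigr)=D$. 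Your final appeal to the application (pre-designing the block triangulations so that $\alpha_i$ are edges) is reasonable pragmatically, but it does not prove the lemma as stated.
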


\begin{proof}
Fix an orientation on $X$ and assume that the $T_i, i=0,1$, have the induced boundary orientation.   For each $i=0,1$, we may choose a basis, $(0,\infty)$, for the homology of the boundary torus $T_i$ so that the edges of the one-vertex triangulation have slopes $(0,\infty,1)$, the basis $(0,\infty)$ induces the boundary orientation, and $\alpha_i$ has non-positive slope, $\alpha_i \leq 0$.

Applying Lemma \ref{l-layer-fib}, layer $2D$ tetrahedra on the boundary component $T_0$ so that the resulting triangulation has edges with slopes $\left( \R{2D-1}{2D-2}, \R{2D}{2D-1}, \R{2D+1}{2D} \right)$.

Now, let $X'$ be the manifold obtained by gluing the boundary triangulations together via an orientation reversing map that identifies the edge with slope $\R{2D+1}{2D}$ in $T_0$ with the edge with slope $0$ in $T_1$.  This identifies the pair of edges with slopes $\left( \R{2D-1}{2D-2}, \R{2D}{2D-1} \right)$ in $T_0$, with the pair of edges with slopes $(1,\infty)$ in $T_1$, or its reverse.  Note that the edge $\left( \R{2D-1}{2D-2}, \R{2D}{2D-1} \right)$ in the Farey graph for $T_0$ separates $\infty$ and the image of $\alpha_1$.

Now compute the distance in the original basis for $T_0$ using Lemma \ref{l-distance-fib}.  We have distance $\D \left(\alpha_0,\alpha_1 \right) > \D \left(\infty,\R{2D-1}{2D-2} \right) = \lfloor \frac {2D-2}{2} \rfloor + 1 = D$, as claimed.

\end{proof}

\subsection{Blocks from Links}
\label{s:blocks}

In this section we construct the required block manifolds.  In each case, we prescribe a set of bipartitions of boundary components and then construct a manifold whose minimal genus Heegaard surfaces induce precisely that set of bipartitions of boundary components.   All of our examples are Heegaard genus two.   Three of the four are realized as the {\it exterior} of a knot or link in $S^3$, that is, each manifold is homeomorphic to $X(L) = S^3 - N(L)$ where $L$ is a knot or link in $S^3$ and $N(\cdot)$ denotes an open regular neighborhood.   The boundary of each manifold is a union of tori, and we often abuse notation by referring to components of the link, rather than to their corresponding boundary components.  The fourth block manifold is obtained by Dehn filling on a torus boundary component of the third block manifold. Many of the results in this section are not new, and are collected for the sake of specificity.


\begin{figure}[h]

\includegraphics[width=1.35in]{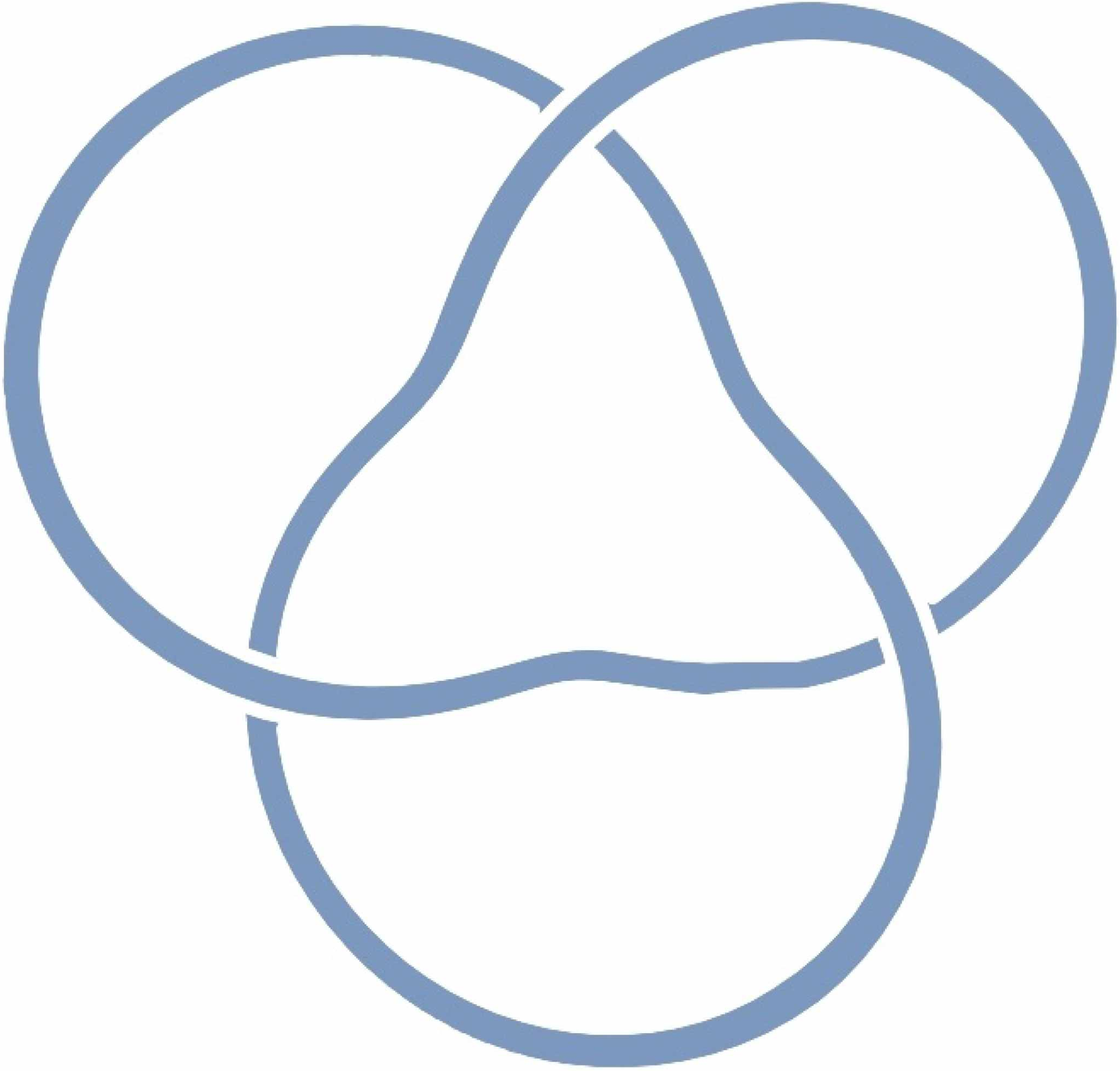}
\hspace{.5in}
\includegraphics[width=2.75in]{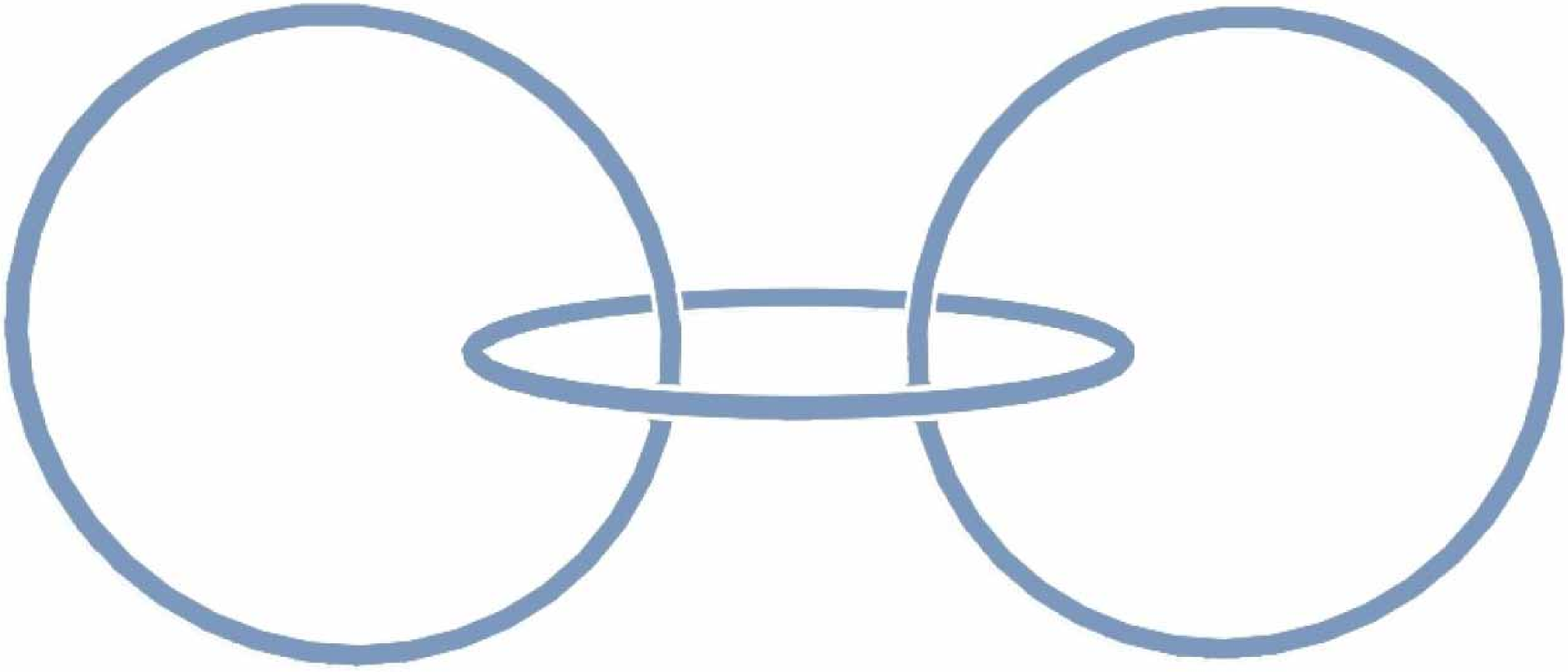}
\caption{Trefoil knot and 3 link chain.}
\label{f:trefoilChain}
\end{figure}

For \bvar~ blocks and the \bend~ block we need a genus two manifold with a single incompressible torus boundary component.  The exterior of any tunnel number one knot will do, we choose a simple one:

\begin{lem}[\bvar, \bend]
\label{l-trefoil-complement}
Let $K \subset S^3$ be the trefoil knot (see Figure \ref{f:trefoilChain}) and $X(K) = S^3 - N(K)$ be its exterior.  Then $X(K)$ has Heegaard genus two.
\end{lem}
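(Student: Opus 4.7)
The plan is to establish both an upper bound and a lower bound on $g(X(K))$. For the upper bound, I would exhibit an explicit genus two Heegaard splitting. The most direct route is to use the fact that the trefoil is a $2$-bridge knot (equivalently, a tunnel number one knot): there is a properly embedded arc $\tau \subset X(K)$, an unknotting tunnel, such that $X(K) \setminus N(\tau)$ is a genus two handlebody $W$. Setting $V = N(\partial X(K) \cup \tau)$, which is a compression body with $\partial_- V = \partial X(K)$ and one $1$-handle attached, gives the required decomposition $X(K) = V \cup_H W$ with $H$ of genus two. Alternatively, one can realize the trefoil as a $(2,3)$-torus knot lying on a standardly embedded genus one surface in $S^3$ and argue that the complementary handlebody, together with a neighborhood of an appropriately chosen arc, exhibits a genus two splitting directly.

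For the lower bound, I would rule out Heegaard genus $\leq 1$. Genus zero is immediate, since a genus zero Heegaard splitting would force $X(K)$ to be a ball, contradicting the fact that $\partial X(K)$ is a torus. For genus one, note that $X(K)$ has a single torus boundary component, so in any Heegaard splitting $V \cup_H W$ the torus $\partial X(K)$ lies in exactly one compression body, say $V$. If $g(H) = 1$, then $H$ is a torus, $W$ is a solid torus, and $V$ must be a compression body with $\partial_+ V = T^2$ and $\partial_- V = T^2$, i.e.\ $V \cong T^2 \times [0,1]$ with no $1$-handles. This forces $X(K)$ to be the union of $T^2 \times I$ and a solid torus along a torus, which is itself a solid torus. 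But a knot in $S^3$ has solid torus exterior only when it is the unknot, and the trefoil is knotted.

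The only substantive point is the claim that the trefoil is not the unknot. This can be cited from any standard source (e.g.\ via the nontriviality of the Alexander polynomial $t^2 - t + 1$, or via the fact that the trefoil complement is a nontrivial Seifert fibered space over the disk with two exceptional fibers of multiplicities $2$ and $3$, hence not homeomorphic to a solid torus). Combining the two bounds yields $g(X(K)) = 2$, as required. I do not expect any obstacle here; the result is classical, and the proof is essentially a verification that the trefoil is tunnel number one and knotted.
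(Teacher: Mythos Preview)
Your proposal is correct and follows essentially the same approach as the paper: both rely on the fact that the trefoil is tunnel number one to obtain the genus two upper bound. The paper's proof is a one-line citation (``It is well known that $K$ is tunnel number one (genus two)''), whereas you additionally spell out the lower bound by ruling out genus $\leq 1$; this extra detail is fine and fills in what the paper leaves implicit.
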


\begin{proof} It is well known that $K$ is tunnel number one (genus two), see e.g.~\cite{kobayashi-2-bridge}.
\end{proof}

For \bor~ blocks, we want a manifold whose minimal genus Heegaard surfaces realize every non-trivial bipartition of its three boundary components.   The simplest such manifold seems to be the exterior of the three component chain,  whose irreducible, and even non-irreducible, Heegaard splittings are quite well understood \cite{schultens-Heegaard-product}, \cite{moriah-sedgwick-weakly}.  Note that it is impossible for a genus two Heegaard surface to trivially bipartition the boundary components, $\{T_0, T_1, T_2  | \emptyset \}$, as a genus two compression body $V$ cannot have three torus boundary components in $\partial_-V$.

\begin{lem}[\bor]
\label{l-pair-of-pants}
Let $C \subset S^3$ be the three component chain (see Figure \ref{f:trefoilChain}), and $X(C) = S^3 - N(C)$ its exterior.  Then,
\begin{enumerate}

\item $X(C)$ has Heegaard genus two,
\item every non-trivial bipartition $\{T_i, T_j| T_k\}$ of the three boundary components of $\partial X(C)$ is induced by a genus two Heegaard surface for $X(C)$.
\end{enumerate}
\end{lem}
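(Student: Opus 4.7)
The plan is to work entirely in the product structure $X(C) \cong P \times S^1$, where $P$ is a pair of pants. As noted in the lemma statement, this is the standard identification for the three-chain complement, which fibers over $S^1$ with trivial monodromy over a planar Seifert surface. Label the boundary tori $T_i = \partial_i \times S^1$ for each boundary circle $\partial_i$ of $P$.

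For the lower bound $g(X(C)) \geq 2$, I would argue by a direct count: a genus-$1$ compression body is either a solid torus or $T^2 \times I$, so it has at most one torus in its negative boundary. A hypothetical genus-$1$ Heegaard splitting $V \cup W$ of $X(C)$ would yield $|\partial_- V| + |\partial_- W| \leq 2$, contradicting $|\partial X(C)| = 3$.

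For the upper bound, and to realize a chosen non-trivial bipartition $\{T_i, T_j \mid T_k\}$ by a genus-$2$ splitting, I plan to construct the Heegaard decomposition explicitly. Fix an essential arc $\alpha \subset P$ from $\partial_i$ to $\partial_j$ avoiding $\partial_k$, and lift to $\beta = \alpha \times \{0\} \subset P \times S^1$. Take $V$ to be a closed regular neighborhood of $T_i \cup T_j \cup \beta$; by direct inspection, $V$ is built from the two collars $T_i \times I$ and $T_j \times I$ joined by a single $3$-dimensional $1$-handle $N(\beta)$, so $V$ is a genus-$2$ compression body with $\partial_- V = T_i \cup T_j$.

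The main obstacle is to verify that the complement $W = \overline{(P \times S^1) \setminus V}$ is likewise a genus-$2$ compression body, with $\partial_- W = T_k$. My plan is to exhibit an explicit spine for $W$: pick an essential arc $\delta \subset P$ from $\partial_k$ to itself separating $\partial_i$ from $\partial_j$, and lift it to $\gamma = \delta \times \{\pi\}$ at the opposite $S^1$-height from $\beta$, so that $\gamma$ lies in $W$ (even though $\alpha$ and $\delta$ cross in $P$). Then I would show that $W$ deformation retracts onto $T_k \cup \gamma$; the cleanest way is to cut $P$ along $\alpha \cup \delta$ (which meet transversely in a single point) to obtain a $4$-holed sphere and track the resulting decomposition of $P \times S^1$ into the neighborhoods of the two spines $T_i \cup T_j \cup \beta$ and $T_k \cup \gamma$. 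An Euler-characteristic sanity check $\chi(V) + \chi(W) - \chi(\partial_+ V) = -1 - 1 - (-2) = 0 = \chi(P \times S^1)$ corroborates the decomposition. Varying the choice of the distinguished boundary circle $\partial_k$ across the three boundary circles of $P$ then realizes all three non-trivial bipartitions by a genus-$2$ Heegaard surface, completing both parts of the lemma.
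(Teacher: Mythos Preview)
Your approach is essentially the same as the paper's: the paper's one-line proof simply says that for each pair of link components ``there is a handle and a short arc connecting them that induces a genus two Heegaard splitting,'' which is exactly your tunnel $\beta = \alpha \times \{0\}$ joining $T_i$ to $T_j$, recast in the $P \times S^1$ product structure. Your explicit lower-bound count and your dual spine $T_k \cup \gamma$ for $W$ are correct added detail that the paper omits.

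One small slip: cutting $P$ along $\alpha \cup \delta$ (two arcs meeting transversely in one interior point) yields \emph{two disks}, not a $4$-holed sphere --- the Euler characteristic goes from $-1$ up to $+1$ after cutting along both arcs, and connectivity is lost. This does not affect your argument, since what you actually need is that these two disks, crossed with $S^1$ and reglued appropriately, exhibit $P \times S^1$ as $V \cup_{\partial_+} W$; equivalently, each disk $\times\, S^1$ is a solid torus mediating between the two spines. With that correction your verification that $W$ is a genus-$2$ compression body with $\partial_- W = T_k$ goes through.
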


\begin{proof}
Again, these facts are well known:  it is easy to see that for each pair of link components, there is a handle and a short arc connecting them that induces a genus two Heegaard splitting that separates the pair from the other link component.

\end{proof}

\begin{figure}[h]
\includegraphics[width=2.25in]{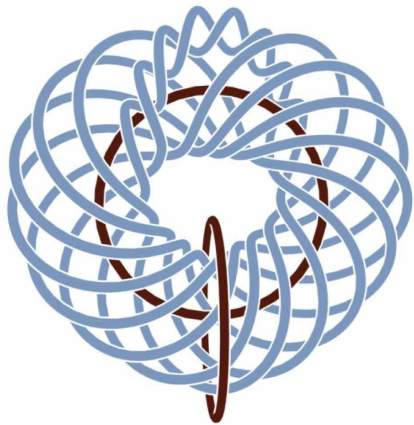}
\caption{Link with three components: $T(7,17,6)$ and two unknots $U_0$ and $U_1$.}
\label{f:t7176}
\end{figure}

For \band~ and \brep~ blocks, we want a manifold whose minimal genus Heegaard surfaces all prefer the same bipartition of its three boundary components.  This is a bit more challenging. Fortunately, Morimoto, Sakuma and Yokota showed that certain twisted torus knots are not 1-bridge with respect to an unknotted torus in $S^3$, providing the basis for the following.

\begin{lem}[\band, \brep]
\label{l-twisted-link}
Let $L \subset S^3$ be the link indicated in Figure \ref{f:t7176}.  It is the union of the twisted torus knot $T(7,17,6)$ along with two unknotted components $U_0$ and $U_1$.  Let $X(L)$ be its exterior.   Then,
\begin{enumerate}
\item $X(L)$ has Heegaard genus two,
\item any genus two Heegaard splitting of $X(L)$ induces the same bipartition of boundary components, that is  $\{U_0, U_1 |T(7,17,6) \}$,

\item $X(L)$ does not contain a M\"obius band with its boundary contained on the knotted boundary component.


\end{enumerate}
\end{lem}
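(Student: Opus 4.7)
I would address the three claims separately; (2) is the main content.

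For (1), the plan is to exhibit an explicit genus-two Heegaard splitting of $X(L)$. By Morimoto--Sakuma--Yokota, the twisted torus knot $T(7,17,6)$ is tunnel-number one, so $X(T(7,17,6))$ has Heegaard genus two. The unknots $U_0$ and $U_1$ play structural roles in the twisted-torus construction (as twist axes around which the $(7,17)$ and the $6$-strand twisting are organized), and I would place them on a genus-two Heegaard surface for $S^3$ containing $L$; drilling out the three components then produces a genus-two Heegaard splitting of $X(L)$. For the lower bound, a genus-one compression body has at most one torus component in $\partial_-$, so a genus-one Heegaard splitting can accommodate at most two torus boundary components total; since $X(L)$ has three, $g(X(L)) \geq 2$.

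For (2), I would first record the combinatorial constraint: a genus-two compression body $V$ with $\partial_- V$ consisting of $n$ tori satisfies $g(\partial_+ V) = h + 1$, where $h \geq n-1$ is the number of $1$-handles; setting $g(\partial_+ V) = 2$ forces $h = 1$ and hence $n \leq 2$. Thus any genus-two Heegaard splitting of $X(L)$ induces a $\{2,1\}$-bipartition of its three torus boundary components, and the only candidates are $\{U_0, U_1 \mid K\}$ and $\{K, U_i \mid U_j\}$ for $i \neq j$. I would rule out the latter by contradiction. Suppose $V \cup W$ is a genus-two splitting with $\{K, U_i\} \subset \partial_- V$ and $U_j \subset \partial_- W$. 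Dehn-filling $U_j$ along its meridian trivializes $U_j$ and converts $W$ into a genus-two handlebody $W'$, yielding a genus-two splitting $V \cup W'$ of $X(K \cup U_i)$ in which $K$ and $U_i$ share the compression-body side. I would then Dehn-fill $U_i$ along a slope that undoes its role as an unknot in $S^3$, exploiting the concrete structure of $V$ (two torus $\partial_-$ components joined by a single $1$-handle) to show that the resulting filled splitting exhibits $K$ in a $(1,1)$-position with respect to a Heegaard torus of $S^3$. This contradicts the Morimoto--Sakuma--Yokota theorem that $T(7,17,6)$ is tunnel-number one but not a $(1,1)$-knot.

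For (3), suppose a M\"obius band $M \subset X(L)$ has $\partial M \subset \partial K$. Then $\partial M$ represents a cabling slope of $K$, so $K$ would admit an essential M\"obius band spanning surface, forcing $K$ to be a cable knot (or to have a very restricted JSJ decomposition). Since $T(7,17,6)$ is not a cable knot, as follows from its explicit description or from the Morimoto--Sakuma--Yokota analysis of incompressible surfaces in twisted torus knot exteriors, no such M\"obius band can exist. The main obstacle throughout lies in part (2): extracting a $(1,1)$-position for $K$ from the assumed bad bipartition requires careful tracking of the Heegaard surface $V \cup W$ under the successive Dehn fillings of $U_i$ and $U_j$, and invokes the machinery of Moriah, Rieck, Rubinstein, and Sedgwick on Heegaard splittings and Dehn surgery cited in Section~\ref{s:triangulating}.
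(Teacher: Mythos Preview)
Your strategy for (2) matches the paper's: assume a bad bipartition $\{K, U_i \mid U_j\}$, fill $U_j$ meridionally to conclude that $K \cup U_i$ is a tunnel-number-one link, and then contradict Morimoto--Sakuma--Yokota by showing $K$ must be $(1,1)$. The gap is in that last implication. You propose to extract a $(1,1)$ position by a further Dehn filling on $U_i$, ``exploiting the concrete structure of $V$''; but filling $U_i$ in $V$ merely converts $V$ into a genus-two compression body with $\partial_- = T_K$, recovering only the genus-two splitting of $X(K)$ you already had, not a bridge presentation over a genus-one Heegaard torus. The passage from ``$K \cup U$ is a tunnel-number-one link with $U$ unknotted'' to ``$K$ is $(1,1)$'' is a genuine theorem---it is exactly Lemma~4.13 of Moriah--Sedgwick \cite{moriah-sedgwick-twisted}, and that is what the paper cites. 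The Dehn-surgery machinery you invoke (\cite{moriah-rubinstein, rieck-heegaard-dehn, rieck-sedgwick-1, rieck-sedgwick-2, moriah-sedgwick-heegaard-dehn}) addresses a different question (when fillings create \emph{new} Heegaard surfaces) and does not supply this step.

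For (3) your route is different from the paper's and, once completed, arguably cleaner: since $X(L) \subset X(K)$, a properly embedded M\"obius band in $X(L)$ with boundary on $T_K$ sits in $X(K)$ as well, and the frontier of its regular neighborhood would be an essential annulus in $X(K)$; so it suffices to know that $X(K)$ is anannular (e.g.\ that $T(7,17,6)$ is hyperbolic). You should state and justify that fact rather than gesture at ``the MSY analysis of incompressible surfaces,'' which is not what their paper does. The paper instead argues entirely inside $X(L)$: it builds an explicit essential pair of pants $P$ from a $(2,5)$-annulus in the ambient $T^2 \times [-1,1]$, intersects a putative M\"obius band $B$ with $P$ to force $\partial B$ to have slope $n/2$ on $T_K$, and then caps $B$ with the M\"obius band in the meridional filling solid torus to produce a Klein bottle in $S^3$. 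That argument is longer but self-contained, requiring no external input about the geometry of $X(K)$.
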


Note that conclusion (3) is not  needed for the \band~ or \brep~ blocks themselves.  Rather, it is technical condition used for the construction of the \bnot~ block via Lemma \ref{l-twisted-link-filled}, which follows.

\begin{proof}
(1) It is well known \cite{msy} and easy to see that a short arc joining the pair of twisted strands is a tunnel system for $T(7,17,6)$.   The strands can be untwisted by sliding them over the tunnel, after which the tunnel appears to be the ``middle tunnel" \cite{moriah-sedgwick-twisted} for the torus knot $T(7,17)$.  Moreover, this gives a genus two splitting of the entire link as the indicated unknots $U_0$ and $U_1$ are cores for the complementary handlebody.  Note that this genus two splitting induces the bipartition $\{T(7,17,6) | U_0,U_1\}$ of the boundary components.  This is also a minimal genus splitting as no exterior of a link with 3 components has genus one.

(2) Suppose that a genus two Heegaard splitting induces a bipartition that isolates one of the two unknotted components, $\{U_i|U_j,T(7,17,6)\}$, for some $i \neq j$.  In particular, this implies that the link $T(7,17,6) \cup U_j$ is tunnel number one.   Lemma 4.13 of \cite{moriah-sedgwick-twisted} states that any knot whose union with some unknot is a tunnel number one link must be $(1,1)$. That is, it has a 1-bridge presentation with respect to an unknotted torus.  However this is a contradiction, as Morimoto, Sakuma and Yokota \cite{msy} demonstrated that the knot $T(7,17,6)$ is not $(1,1)$.  It follows that any genus two Heegaard splitting of $X(L)$ induces the bipartition $\{U_0,U_1 | T(7,17,6) \}$.

(3) Note that the exterior of the link $U_0 \cup U_1$ is a product, $T^2 \times [-1,1]$.  Draw the $(7,17)$ torus knot as a curve on the level surface $T^2 \times \{0\}$ in this product.   Choose two strands of the torus knot and give them 6 half twists to obtain the twisted torus knot $T(7,17,6)$. Its union with the pair of unknots is our twisted torus link $L$.

Now, note that the $(2,5)$ curve drawn on the same level torus meets the $(7,17)$ curve in a single point.   Then the product $(2,5) \times [-1,1]$ is a properly embedded annulus in the product that meets the torus knot once, and the unknots in slopes $\frac25$ and $\frac52$, respectively. Moreover, the twisting needed to construct $T(7,17,6)$ can be performed in the complement of this annulus.   Drill out the twisted torus knot.  The annulus is punctured once (with slope $\infty=\frac10$ on the knot) and becomes an essential pair of pants $P$ in the link exterior.

Let $B \subset X(L)$ be a properly embedded M\"obius band with its boundary in the knotted component and that meets $P$ in the minimal number of components.  Because both surfaces are essential, the intersection consists of a collection of arcs that are essential in both surfaces.

In fact, there is only a single arc of intersection: if there were two or more, then there would be a pair of arcs that are parallel and adjacent on $P$ and that are also parallel on $B$.   Then the union $B' = R_P \cup R_B$, where $R_P$ and $R_B$ are the rectangles the arcs bound in $P$ and $R$, respectively, is a M\"obius band (see for example \cite{rieck-heegaard-dehn}) that can be isotoped to meet $P$ in a single arc.

However, it is also impossible for $P \cap B$ to consist of a single arc:  this implies that the M\"obius band has slope $\frac n 2$ for some $n$ as it meets the meridian $\frac 1 0$ twice.  But, any $\frac n 2 $ curve also bounds a M\"obius band in the solid torus that is attached to perform the meridional ($S^3$) filling on the knotted component.   The union of the $B$ and the M\"obius band in the solid torus is a Klein bottle embedded in $S^3$, a contradiction.

\end{proof}

Finally, for \bnot~ blocks we want a manifold for which no minimal genus Heegaard surface splits its two boundary components.  Note that $X(L)$ is almost what we want; no minimal Heegaard surface splits the two unknotted boundary components. Nonetheless, there is an inconvenient third  boundary component (the knotted one).   Can we get rid of it?

There are many results that demonstrate that after a ``sufficiently large'' Dehn filling, the filled manifold inherits the qualities of the unfilled manifold. Fortunately, that is also true for Heegaard structure \cite{moriah-rubinstein, rieck-heegaard-dehn, rieck-sedgwick-1, rieck-sedgwick-2, moriah-sedgwick-heegaard-dehn} and that is precisely what we use here:

\begin{lem}[\bnot]
\label{l-twisted-link-filled}
Let $L \subset S^3$ be the link indicated in Figure \ref{f:t7176}, and let $X(L;\gamma)$ be the manifold obtained by Dehn filling the knotted component along the slope $\gamma$.  If $d_\mathcal F(\gamma,\infty)>10$, where $d_\mathcal F$ is the distance in the Farey graph, then
\begin{enumerate}
\item $X(L;\gamma)$ has Heegaard genus two,
\item every genus two Heegaard splitting of $X(L;\gamma)$ induces the trivial boundary bipartition $\{U_0,U_1 | \emptyset \}$.
    \end{enumerate}
\end{lem}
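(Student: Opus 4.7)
The plan is to derive Lemma~\ref{l-twisted-link-filled} from Lemma~\ref{l-twisted-link} together with one of the persistence theorems for Heegaard splittings under Dehn filling cited in the preamble to this section.

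For the upper bound and the existence of a genus 2 splitting with the trivial bipartition, I will begin with a genus 2 Heegaard splitting $V \cup_H W$ of $X(L)$ provided by Lemma~\ref{l-twisted-link}(1). By Lemma~\ref{l-twisted-link}(2) this splitting realizes the bipartition $\{U_0, U_1 \,|\, T(7,17,6)\}$, so the knotted torus appears as a component of $\partial_- W$. Dehn filling that torus along $\gamma$ replaces $W$ with a new compression body $W(\gamma)$ having the same positive boundary and one fewer negative boundary component: a spine for $W$ extends to a spine for $W(\gamma)$ by adjoining the core of the attached solid torus. Hence $V \cup_H W(\gamma)$ is a genus 2 Heegaard splitting of $X(L;\gamma)$ inducing the bipartition $\{U_0, U_1 \,|\, \emptyset\}$, and in particular $g(X(L;\gamma)) \leq 2$.

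The substantive step is the converse, for which I will invoke a quantitative persistence theorem from the works of Moriah--Rubinstein, Rieck, Rieck--Sedgwick, and Moriah--Sedgwick cited at the beginning of this section. Such a theorem asserts that there is a finite collection $\mathcal{E}$ of \emph{exceptional} slopes on the knotted torus $T \subset \partial X(L)$ (typically the boundary slopes of essential surfaces meeting $T$, together with slopes arising from particular Heegaard-reducing configurations) with the property that if $d_\mathcal{F}(\gamma, \mathcal{E})$ exceeds a universal bound, then every minimal genus Heegaard splitting of $X(L;\gamma)$ is induced from a minimal genus Heegaard splitting of $X(L)$ by the filling construction above. In our setting the essential pair of pants $P$ constructed in the proof of Lemma~\ref{l-twisted-link}(3) has boundary slope $\infty$ on $T$, so $\infty \in \mathcal{E}$; and the no-M\"obius-band conclusion of Lemma~\ref{l-twisted-link}(3) is used precisely to exclude the remaining exceptional slopes that a boundary M\"obius band would otherwise contribute. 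The hypothesis $d_\mathcal{F}(\gamma, \infty) > 10$ will then serve as the quantitative distance bound required by the persistence theorem. Granting this, every genus 2 Heegaard splitting of $X(L;\gamma)$ is induced from a genus 2 Heegaard splitting of $X(L)$, which by Lemma~\ref{l-twisted-link}(2) has bipartition $\{U_0, U_1 \,|\, T(7,17,6)\}$; after filling the induced bipartition is $\{U_0, U_1 \,|\, \emptyset\}$. This simultaneously yields $g(X(L;\gamma)) \geq 2$ and the uniqueness of the bipartition.

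The principal obstacle will be aligning the hypotheses of one of the cited persistence theorems with the present setting: verifying that, in view of Lemma~\ref{l-twisted-link}(3), the relevant set of exceptional slopes reduces to the single slope $\infty$, and checking that the constant $10$ meets the explicit Farey-distance bound of whichever theorem is applied. Once those technical verifications are in place, the amalgamation-by-filling argument above closes the lemma.
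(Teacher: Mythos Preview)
Your overall strategy matches the paper's: the upper bound comes from extending a genus~2 splitting of $X(L)$ through the filling, and the lower bound plus bipartition uniqueness come from a persistence result asserting that for $\gamma$ far enough from $\infty$, any genus~2 splitting of $X(L;\gamma)$ is already one of $X(L)$. However, the mechanism you sketch for the persistence step is not quite what is needed, and the paper's argument runs differently in its details.

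The paper does not work with a set $\mathcal E$ of exceptional boundary slopes of essential surfaces. Instead it uses the Rieck--Sedgwick theory of \emph{new} Heegaard surfaces: a splitting of $X(L;\alpha)$ not isotopic to one of $X(L)$ yields a \emph{derived} surface in $X(L)$ (either a thick level or an essential surface) with boundary slope $\alpha$ or at Farey distance~1 from $\alpha$. The crucial observation you are missing is that the $\infty$-filling $X(L;\infty)$ is the product $T^2\times[-1,1]$, which has a \emph{new} genus-1 Heegaard surface; this furnishes a reference derived surface of slope $\infty$. If $X(L;\gamma)$ also had a new genus~$\le 2$ splitting, Rieck's Theorem~5.2 bounds the geometric intersection of the two derived boundary slopes by $36 g_1 g_2 + 36 g_1 + 18 g_2 + 18 = 180$ (with $g_1=2$, $g_2=1$), hence Farey distance $<9$, and with the possible $+1$ offset this forces $d_{\mathcal F}(\gamma,\infty)<10$. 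The no-M\"obius-band conclusion of Lemma~\ref{l-twisted-link}(3) is used not to ``exclude exceptional slopes'' but as the hypothesis that makes Rieck's intersection bound applicable; the pair of pants $P$ plays no direct role here. So your identification of $\infty$ as special via $\partial P$ and of the M\"obius band condition as pruning $\mathcal E$ should be replaced by this comparison-of-derived-surfaces argument, from which the constant $10$ then falls out explicitly.
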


\begin{proof}

Heegaard surfaces survive Dehn fillings.  That is, after filling any slope $\gamma$, a Heegaard surface for $X(L)$ is also a Heegaard surface for $X(L;\gamma)$.  Thus the genus of $X(L;\gamma)$ is at most 2.

We now show that under the hypothesis $\D(\gamma,\infty)>10$, every genus two Heegaard splitting of $X(L;\gamma)$ is isotopic (in $X(L;\gamma)$) to a Heegaard splitting of $X(L)$. It will follow that the genus of $X(L;\gamma)$ is exactly two, and any genus two splitting induces the desired bipartition of boundary components.

We will say that a filled manifold $X(L;\alpha)$ has a {\it new} Heegaard surface if there is a Heegaard surface $\Sigma \subset X(L;\alpha)$  for the filled manifold that is {\it not} isotopic in $X(L;\alpha)$ to a Heegaard surface for $X(L)$.  Rieck and Sedgwick \cite{rieck-sedgwick-1} have shown that there are two possibilities for a new Heegaard surface $\Sigma$, depending on whether the core of the attached solid torus is isotopic into $\Sigma$ in the filled manifold.  In either case, we can find a useful derived surface $\Sigma' \subset X(L)$ by isotoping $\Sigma$ in $X(L;\alpha)$ and then drilling out the core:  if the attached core is not isotopic into $\Sigma$, then $\Sigma$ is isotopic to a ``thick level" in some thin presentation of the core, which is a knot in $X(L;\alpha)$.   After drilling out the core, we obtain a properly embedded surface $\Sigma \subset X(L)$ that meets the knotted boundary component in curves of slope $\alpha$.  If the core {\it is} isotopic into $\Sigma$, then drilling out the core and possibly compressing, we obtain a properly embedded essential surface $\Sigma' \subset X(L)$. Its genus  is at most that of $\Sigma$ and its boundary curves meet the knotted boundary component in a slope $\alpha'$, where $\D(\alpha',\alpha)=1$.

If two different filled manifolds $X(L;\alpha)$ and $X(L;\beta)$ have new Heegaard surfaces, then the pair of bounded surfaces derived above, each either essential or ``thick,'' can be isotoped to intersect essentially (\cite{gabai-thin-position}, \cite{rieck-heegaard-dehn}).   Moreover, the previous lemma shows that there is no M\"obius band in $X(L)$ with its boundary in the knotted component.  In that case Rieck showed that  the number of intersections between the slopes $\alpha$ and $\beta$ is bounded by a quadratic function, $36 g_1 g_2 + 36 g_1 + 18 g_2 + 18$, where $g_1$ and $g_2$, $g_1 \geq g_2$, are the genera of the derived surfaces (\cite{rieck-heegaard-dehn} Theorem 5.2).   (Theorem 5.2 is stated with a stronger hypothesis, that $X(L)$ is a-cylindrical, but the proof clearly states that either the bound holds or there is a M\"obius band meeting the boundary component that was filled.)

Now, we know that the manifold $X(L,\infty)$ is the product $T^2 \times [-1,1]$ and thus has a new Heegaard surface of genus 1.  (As the knotted component is not a torus knot, in this case the derived surface is a thick level with genus 1 and slope $\infty$.)

Suppose then that $X(L,\gamma)$ has a new Heegaard surface of genus at most 2.  Then the slopes of the derived surfaces intersect at most 180 times (applying the above quadratic function with $g_1 =2 \geq g_2 = 1$) and thus have distance in the Farey graph $d_\mathcal F \leq \log_2 180 + 1 < 9$.   As the derived surface in $X(L,\gamma)$ has distance 0 or 1 from $\gamma$, we have $\D(\gamma,\infty) < 10$, a contradiction.

It follows that $X(L,\gamma)$ has no new Heegaard surfaces with genus at most 2.   Then the genus of $X(L,\gamma)$ is 2. Moreover, every genus two Heegaard surface of $X(L,\gamma)$ is isotopic in $X(L,\gamma)$ to a Heegaard surface for $X(L)$, and in particular induces the boundary bipartition $\{U_0,U_1 | \emptyset \}$.  This completes the proof.

 \end{proof}

Construct the \bnot~ blocks by using Lemmas \ref{l-triang-gluing} and \ref{l-twisted-link-filled} to glue the triangulated twisted torus link exterior to a one-tetrahedron solid torus (see for example, \cite{jaco-sedgwick}) so that $\mu$, the curve bounding a meridional disk of the solid torus, and $\infty$ the meridian of the twisted torus link, satisfy $\D(\mu,\infty) > 11$.

\subsection{Proof of Proposition \ref{t-triangulation-quadratic}}
\label{s:proof_prop}

\begin{proof}
The manifold $M_Q$ is obtained by gluing a collection of blocks along pairs of torus boundary components via high distance maps.   There is exactly one block for each term (\bvar, \band, \bor, \bnot) in $Q$, plus the \bend~ block, for a total of $|Q|+1$ blocks.

As a preprocessing step, we triangulate each of the block types so that each torus boundary component has a one-vertex triangulation. For each of the three link exteriors, use the method Weeks describes in \cite{weeks-computation} and implements in his {\it SnapPea} program, to convert the link diagrams given by Figures \ref{f:trefoilChain} and \ref{f:t7176}
to ideal triangulations of the link exteriors.  Then construct a (non-ideal) triangulation by subdividing and deleting tetrahedra meeting the ideal vertex.   Use Jaco and Rubinstein's method to convert this triangulation to a 0-efficient triangulation \cite{jaco-rubinstein-0efficient}, which has the desired property  that it restricts to a one-vertex triangulation of each torus boundary component.  For each torus boundary component of each block, use normal surface theory to identify, among essential surfaces meeting the boundary component, a surface maximizing Euler characteristic.

Let $T$ be the maximal number of tetrahedra used by one of the four triangulated blocks types. Since there are $\LQ+1$ blocks, we thus require at most $T(\LQ+1)$ tetrahedra before gluing.

There is a computable constant $K$, depending only on the homeomorphism types of the blocks, so that if any set of blocks are glued with maps of distance at least $Kg$ (relative to the boundaries, then any Heegaard surface whose genus is at most $g$ is an amalgamation of splittings of the blocks. (The proof of this is given in the appendix; distance is measured between the surfaces chosen above.) As we want to guarantee that any splitting of genus at most $|Q|+2$ is an amalgamation, it is thus sufficient to glue each pair of blocks with a map of distance $K(|Q|+2)$, which by Lemma \ref{l-triang-gluing} requires $2 K (|Q|+2)$ tetrahedra per gluing. Since each of the $|Q|+1$ blocks has at most 3 boundary components, there are at most $\frac 3 2(|Q|+1)$ pairs of boundary components to glue.  We conclude that we need at most $\frac 3 2(\LQ+1) 2K(\LQ+2)$ tetrahedra to glue the blocks.

The total number of tetrahedra required to construct $M_Q$ is then the sum of those for the blocks and those for gluings,
$$ t(M_Q) \leq T( |Q|+1) + 3K(|Q|+1)(|Q|+2)$$
which is clearly quadratically bounded in \LQ.
\end{proof}

\section{Open Questions}
\label{s:questions}

We now discuss some questions that remain.  The most obvious is:
\begin{quest}
Is {\sc Heegaard Genus $\leq g$} in \NP?
\end{quest}

Next, since the 3-sphere is, by definition, the  3-manifold with genus 0, {\sc 3-Sphere Recognition} is precisely {\sc Heegaard Genus $\leq 0$}, i.e., a special case of our general problem with fixed parameter $g=0$.  Schleimer showed that {\sc 3-Sphere Recognition} is in \NP~\cite{schleimer}.  And, using Kuperberg's work \cite{kuperberg}, Zentner showed that {\sc 3-Sphere Recognition} is also in co-\NP~ if we assume that the Generalized Riemann Hypothesis  is true \cite{zentner}.   Thus, without disproving a major conjecture, we do not expect  the special case {\sc Heegaard Genus $\leq 0$} to be  \NP-hard.  Since Heegaard genus is such an important invariant, it is worth asking about the  complexity of the problem for other small fixed values of $g$, in particular  $g \leq 2$:

\begin{quest} What is the computational complexity of deciding {\sc Heegaard Genus $\leq 1$} and {\sc Heegaard Genus $\leq 2$}?
\end{quest}

Finally, note that our construction produces non-hyperbolic manifolds because the identified torus boundary components are incompressible after gluing. It seems probable that hyperbolic  examples can be constructed by gluing together  hyperbolic block  manifolds that have higher genus boundary components.   But, the resulting manifolds would most definitely be {\it Haken} (have embedded incompressible surfaces).   Do embedded essential surfaces explain \NP-hardness or,

\begin{quest}
Is {\sc Heegaard Genus $\leq g$}   \NP-hard when restricted to the class of non-Haken manifolds?
\end{quest}

\section{appendix: Sufficiently complicated amalgamations}
\label{s:appendix}

In this section we provide a proof of the following proposition, based on several well-known results.

\begin{pro}
\label{t:amalgamation}
There is a computable constant $K$, depending only on the homeomorphism types of the blocks, so that if any set of blocks are glued with maps of distance at least $Kg$ (in the sense of Theorem \ref{t:GluingTheorem} below), then any Heegaard surface whose genus is at most $g$ is an amalgamation of splittings of the blocks.
\end{pro}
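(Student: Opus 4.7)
The plan is to start with any Heegaard surface $\Sigma \subset M_Q$ with $g(\Sigma) \le g$, apply Scharlemann--Thompson untelescoping to obtain a generalized Heegaard splitting of $M_Q$ whose thick and thin levels have total genus bounded linearly in $g$, and then argue that each gluing torus of the block decomposition must appear among its thin levels. Once this is shown, the restriction of the generalized splitting to each block is itself a generalized Heegaard splitting of that block and amalgamates to a Heegaard splitting of the block; the original $\Sigma$ is then, by construction, the amalgamation of these block splittings along the gluing tori in the sense of Section \ref{s-hs-amalg}.

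Fix a gluing torus $T \subset M_Q$ separating adjacent block manifolds $B_+$ and $B_-$, and let $\{H_i\}$, $\{F_j\}$ be the thick and thin surfaces of the untelescoping. After isotopy we may assume $T$ meets each $H_i$ and $F_j$ transversely and essentially (inessential curves of intersection can be removed using strong irreducibility of the $H_i$ and incompressibility of the $F_j$). Suppose $T$ is not itself a thin level. Then $T$ meets some thick surface $H_i$ in a nonempty essential family of curves of some slope $\alpha \in \mathcal F(T)$. On the $B_+$ side, work of Hartshorn, Scharlemann--Tomova, and Moriah--Rieck--Rubinstein--Sedgwick (specialized to torus boundary) gives a bound of the form $d_\mathcal F(\alpha, \sigma_+) \le C_+ \cdot g$, where $\sigma_+$ is any slope on $T$ bounding a compressing disk of $H_i$ into $B_+$ or lying on an essential surface in $B_+$ of Euler characteristic controlled by $g$. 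The symmetric bound $d_\mathcal F(\alpha, \sigma_-) \le C_- \cdot g$ holds on the $B_-$ side.

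The slopes $\sigma_+$ and $\sigma_-$ are intrinsic to the blocks $B_+$ and $B_-$, independent of $\Sigma$. Applying the triangle inequality in $\mathcal F(T)$ and the identification of $T$ via the gluing map $\varphi$, we obtain $d_\mathcal F(\sigma_+, \varphi^{-1}(\sigma_-)) \le (C_+ + C_-) \cdot g$. Taking $K$ strictly greater than the maximum of $C_+ + C_-$ over the finitely many pairs of block types meeting at a torus, and performing each gluing so that $d_\mathcal F(\sigma_+, \varphi^{-1}(\sigma_-)) > Kg$, we contradict the assumption that $T$ is not a thin level. Running this argument over every gluing torus simultaneously shows that the generalized Heegaard splitting refines the block decomposition, and therefore $\Sigma$ is an amalgamation of Heegaard splittings of the blocks, as desired.

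The main obstacle is establishing the bound $d_\mathcal F(\alpha,\sigma_\pm) \le C_\pm \cdot g$ \emph{effectively} and uniformly across all block types used. Because the Farey graph is the curve complex of the torus, the desired inequality is the one-dimensional specialization of the general curve-complex distance/genus bound, and the per-block constants $C_\pm$ can be extracted from any normal-surface computation of a maximum Euler characteristic essential surface meeting each torus boundary component (which the triangulation step of Section \ref{s:proof_prop} arranges to carry out). Since there are only finitely many block homeomorphism types, maximizing over them yields the required computable universal constant $K$.
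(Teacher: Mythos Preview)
Your overall strategy matches the paper's: untelescope \`a la Scharlemann--Thompson to get strongly irreducible thick levels and incompressible thin levels, then use a genus-versus-distance bound at each gluing torus to force the torus to be a thin level (equivalently, to isotope every thick/thin surface off it). The paper carries this out by invoking two specific black-box results: first \cite{BachmanSchleimerSedgwick}, to isotope the collection $\mathcal H$ so that its intersection with each complementary piece of $\mathcal F\setminus F$ is again topologically minimal (strongly irreducible or incompressible) there; and then Theorem~5.4 of \cite{bachman-gluings} (building on Li \cite{li-gluings}), which gives the explicit constant $K=24(1-3\chi(S_-)-3\chi(S_+))$ and the conclusion that each such piece can be isotoped off $F$ once the gluing distance exceeds $K\cdot g$.

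Your sketch skips the first of these steps and is imprecise about the second, and that is where the real gap lies. Removing inessential curves of $H_i\cap T$ does not by itself make $H_i\cap B_\pm$ incompressible or strongly irreducible in $B_\pm$ (or in the side of $T$ containing $B_\pm$, which in general is much larger than a single block); without that, there is no reason the piece interacts controllably with the fixed essential surface $S_\pm$. Moreover, the results you cite do not give the inequality you need in the Farey graph of $T$: Hartshorn and Scharlemann--Tomova bound distance in the curve complex of the \emph{Heegaard surface}, not of the boundary torus, and the Moriah--Rieck--Rubinstein--Sedgwick results are framed for Dehn fillings rather than for a strongly irreducible bounded surface on one side of an incompressible torus. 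Turning any of these into the bound $d_{\mathcal F}(\alpha,\sigma_\pm)\le C_\pm\, g$ you assert is essentially the content of the Li/Bachman gluing theorem, which the paper simply quotes. So your outline is on the right track, but to make it a proof you must either supply that argument in full or cite \cite{BachmanSchleimerSedgwick} and \cite{bachman-gluings} as the paper does.
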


\begin{proof}
Suppose $H$ is a minimal genus Heegaard splitting of $M_Q$. It follows from the results of \cite{ScharlemannThompson} that there is a DAG $\Gamma$  such that $H$ is an amalgamation of some generalized Heegaard splitting $\bigcup _{x \in \Gamma} M_x$ of $M_Q$, such that for each $x \in \Gamma$, $V_x \cap W_x$ is {\it strongly irreducible} in $M_x$, and for each $x \ne y$, $V_x \cap W_y$ is a (possibly empty) incompressible surface in $M$. In the parlance of \cite{bachman-topindex}, both kinds of surfaces are {\it topologically minimal} in $M$. Let $\mathcal H$ denote the union of all such topologically minimal surfaces.

For each  boundary component $F$ of each block used in the original construction of $M_Q$ (see Section \ref{s:constructing}), choose a maximal Euler characteristic, properly embedded, incompressible, boundary incompressible surface in that block that is incident to $F$. Let $\mathcal S$ be the collection of these chosen surfaces. (Note that the surfaces in $\mathcal S$ need not be disjoint in each block).

Let $M_-$ and $M_+$ denote blocks used in the construction of $M_Q$, such that $M_+ \cap M_- \ne \emptyset$. Let $F$ be a component of $M_+ \cap M_-$. Then $F$ can be identified with boundary components $F_-\subset \partial M_-$ and $F_+ \subset \partial M_+$. Let $\phi : F_- \to F_+$ denote the gluing map used to attach $M_-$ to $M_+$ along $F$ in the construction of $M_Q$. Let $M_{\phi}$ denote the manifold obtained from $M_-$ and $M_+$ by gluing $F_-$ to $F_+$ via the map $\phi$. Note that $M_\phi$ may be different from $M_- \cup M_+$, as the latter manifold may be obtained from $M_-$ and $M_+$ by gluing along multiple surfaces.  However, if $\mathcal F$ denotes the collection of surfaces at the interfaces between all blocks in $M_Q$, then $M_\phi$ can be identified with a component of the complement of $\mathcal F  \setminus F$.

By \cite{BachmanSchleimerSedgwick}, we can isotope each surface in $\mathcal H$ so that it meets the complementary pieces of $\mathcal F  \setminus F$ in a collection of surfaces that are topologically minimal (in particular, either incompressible or strongly irreducible). After such an isotopy, let $H'$ denote a component of the intersection of such a surface with $M_\phi$.

The first author, building on work of Tao Li \cite{li-gluings}, proved the following theorem, restated here with notation consistent with that of the present paper:

\begin{thm}
\label{t:GluingTheorem}
(cf. \cite{bachman-gluings}, Theorem 5.4.)
 Let $S_-$ and $S_+$ denote the surfaces in $\mathcal S$ chosen to meet $F_-$ and $F_+$ in $M_-$ and $M_+$. Let $K = 24(1 - 3\chi(S_-) - 3\chi(S_+))$.
If
\[d(\phi(S_- \cap F_-), S_+\cap F_+) \ge K \cdot \mbox{\rm genus}(H)\]
then $H'$ can be isotoped to be disjoint from $F$ in $M_\phi$. \footnote{The original theorem is stated so that $H'$ is a closed surface, but this assumption is never used in the proof.}
\end{thm}

Note that $H'$ is a component of $\mathcal H \cap M_\phi$. Applying this Theorem to every such component (noting that genus$(H') \le$ genus $(H)$), we conclude $\mathcal H$ can be isotoped to be disjoint from $F$ in $M_Q$. Each surface in the resulting collection is now topologically minimal in $M_Q-F$. Repeating this argument for every surface in $\mathcal F$ shows that every surface in $\mathcal H$ can be isotoped entirely into some block. It then follows from standard arguments that each surface of $\mathcal F$ can be identified with a component of $\partial M_x$, for some  $x \in \Gamma$. Thus, for each block $B$ in $M_Q$, there is a collection of vertices $\mathcal V$ of $\Gamma$ such that $B=\bigcup _{x \in \mathcal V} M_x$. Amalgamating this generalized Heegaard splitting of $B$ then produces a Heegaard splitting of $B$. Our original Heegaard surface $H$ is then an amalgamation of these Heegaard surfaces of the blocks.
\end{proof}

\bibliographystyle{alpha}
\bibliography{HeegaardNPHard}

\end{document}